\newtheorem{theorem}{Theorem}[section]
\newtheorem{lemma}[theorem]{Lemma}
\theoremstyle{definition}
\newtheorem{definition}[theorem]{Definition}
\newtheorem{example}[theorem]{Example}
\newtheorem{hypothesis}[theorem]{Hypothesis}
\theoremstyle{remark}
\newtheorem{remark}[theorem]{Remark}
\numberwithin{equation}{section}
\newcommand{\bbR}{\mathbb R}
\newcommand{\bbD}{\mathbb D}
\newcommand{\bbC}{\mathbb C}
\renewcommand{\epsilon}{\varepsilon}
\newcommand{\be}{\begin{equation}}
\newcommand{\ee}{\end{equation}}
\newcommand{\cH}{{\mathcal H}}
\newcommand{\cL}{{\mathcal L}}
\renewcommand{\Im}{{\ensuremath{\mathrm{Im}}}}
\newcommand{\fC}{\mathfrak{C}}
\newcommand{\fD}{\mathfrak{D}}
\newcommand{\fM}{\mathfrak{M}}
\newcommand{\fS}{\mathfrak{S}}
\DeclareMathOperator{\Ker}{\mathrm{Ker}}
\newcommand{\linspan}{\mathrm{lin\ span}}
\newcommand{\Dom}{\mathrm{Dom}}
\newcommand{\dom}{\mathrm{Dom}}
\date{\today}
\begin{document}

\title [ The addition and  multiplication theorems]
{On the addition and  multiplication theorems}

\author{K. A. Makarov}
\address{Department of Mathematics, University of Missouri, Columbia, MO 63211, USA}
\email{makarovk@missouri.edu}

\author{E. Tsekanovski\u{i} }
\address{
 Department of Mathematics, Niagara University, P.O. Box 2044,
NY  14109, USA }
\email{tsekanov@niagara.edu}

\dedicatory
{Dedicated with great pleasure to Lev Aronovich Sakhnovich
 on the occasion of his 80th birthday anniversary}

\subjclass[2010]{Primary: 81Q10, Secondary: 35P20, 47N50}

\keywords{Deficiency indices, quasi-self-adjoint extensions,
Liv\v{s}ic functions, characteristic functions}

\begin{abstract} We discuss the classes $\fC$, $\fM$, and $\fS$ of analytic functions that can be realized as the    Liv\v{s}ic characteristic functions of a symmetric densely defined operator $\dot A$ with deficiency indices $(1,1)$, the Weyl-Titchmarsh functions  associated with the pair $(\dot A, A)$ where $A$ is a self-adjoint extension of $\dot A$, and the characteristic function of a maximal dissipative extension  $\widehat A$ of $\dot A$,  respectively. We show that the class $\fM$ is a convex set, both of the classes $\fS$  and $\fC$ are
  closed under multiplication and, moreover,   $\fC\subset \fS$ is a double sided ideal in the sense that $\fS\cdot \fC=\fC\cdot \fS\subset \fS$.
  The goal of this paper is to obtain  these analytic  results  by providing explicit constructions for the corresponding operator realizations.
 In particular, we introduce the concept of an operator coupling  of two unbounded
 maximal dissipative operators and establish an analog
of the  Liv\v{s}ic-Potapov  multiplication theorem \cite{LP} for the operators associated with the function  classes $\fC$
 and $\fS$. We also establish that the modulus  of the von Neumann parameter characterizing the domain of $\widehat A$ is a multiplicative functional with respect to the operator coupling.

\end{abstract}

 \maketitle

 \section{introduction }

In 1946, M. Liv\v{s}ic \cite{L}
introduced fundamental concepts of the  {\it characteristic functions} of a densely defined symmetric operator $\dot A$ with deficiency indices $(1, 1)$,  and of  its maximal non-self-adjoint  extension $\widehat A$. Under the hypothesis that the symmetric operator $\dot A$ is prime\footnote{Recall that a closed symmetric operator $\dot A$ is called a prime operator if $\dot A$ does not have invariant subspaces where the corresponding restriction of $\dot A$ is  self-adjoint},
a cornerstone result \cite[Theorem 13]{L} (also see \cite{AkG} and \cite{AWT})  states   that  the characteristic function  (modulo inessential constant unimodular factor) determines the operator  up to unitary equivalence.
In 1965,  in an attempt  to characterize  self-adjoint extensions  $A$ of a symmetric operator $\dot A$,
    Donoghue \cite{D} introduced the Weyl-Titchmarsh function associated with the pair $(\dot A, A)$ and  showed   that
   the  Weyl-Titchmarsh function determines the pair $(\dot A, A)$ up to unitary equivalence whenever  $\dot A$ is a prime symmetric operator  with deficiency indices $(1,1)$.

  In our recent  paper \cite{MT},  we introduced into  play an auxiliary self-adjoint (reference)
 extension $A$  of $\dot A$ and
     suggested to define
 the characteristic functions  of a symmetric operator and of its dissipative extension as the functions associated with  the pairs $(\dot A, A)$  and  $(\widehat A, A)$, rather than with the single operators $\dot A$ and $\widehat A$, respectively. Honoring  M. Liv\v{s}ic's fundamental contributions to the theory of non-self-adjoint operators and also taking into account the crucial role that  the characteristic function of  a symmetric operator plays in the theory, we   suggested to
  call the characteristic function associated with the pair $(\dot A, A)$ the Liv\v{s}ic function.
 For a detailed treatment  of the aforementioned  concepts of the   Liv\v{s}ic,   Weyl-Titchmarsh, and the characteristic functions   including the discussion of  their interrelations  we refer to \cite{MT}.

The main goal of this paper is to obtain the following two principal results.

Our first  result states  that  given two
 Weyl-Titchmarsh functions  $M_1=M(\dot A_1, A_1)$ and  $M_2=M(\dot A_2, A_2)$,
any convex combination
$pM_1+qM_2$ can also be realized as the Weyl-Titchmarsh function associated with a pair $(\dot A, A_1\oplus A_2)$, where
 $\dot A$ stands for some special
symmetric extension with deficiency indices $(1,1)$ of the direct orthogonal  sum  of $\dot A_1$ and $\dot A_2$
(see Theorem \ref{additionth}).

Our second  result  concerns the computation of the characteristic function
of an operator coupling $\widehat A=\widehat A_1\uplus \widehat A_2$ of two dissipative operators $\widehat A_1$ and $ \widehat A_2$, acting in the Hilbert spaces $\cH_1$ and $\cH_2$,  defined as
a dissipative extension of $\widehat A_1$ outgoing from the Hilbert space $\cH_1$ to the direct sum of the  Hilbert space $\cH_1\oplus \cH_2$  satisfying the constraint
$$
\widehat A|_{\Dom(\widehat A)\cap \dom((\widehat A)^*)}\subset \widehat A_1\oplus (\widehat A_2)^*.
$$
This result, called the multiplication theorem (see Theorem \ref{opcoup}),
 states that   the product $S_1\cdot S_2$  of   the characteristic  functions $S_1$ and $S_2$ associated with the pairs $(\widehat A_1, A_1)$ and
 $(\widehat A_2, A_2)$   coincides with the
characteristic function of  the operator coupling $\widehat A=\widehat A_1\uplus \widehat A_2$ relative to an appropriate reference self-adjoint operator.

It is important to mention that the multiplication theorem   substantially relies on the multiplicativity of   the absolute value  $\widehat \kappa (\cdot)$  of the von Neumann extension parameter
 of a maximal dissipative extension  of $\dot A$  established in  Theorem \ref{nachalo}:
\begin{equation}\label{vnkappa}
\widehat \kappa(\widehat A_1\uplus \widehat A_2)=\widehat \kappa(\widehat A_1)\cdot \widehat \kappa(\widehat A_2).
\end{equation}

Introducing the analytic function  classes  $\fC$ and $\fM$, elements of which
 can be realized as  the Liv\v{s}ic and   Weyl-Titchmarsh functions    associated with a pair
$(\dot A, A)$, respectively,
 along with the analytic function class $\fS$  consisting of all  characteristic functions associated with all possible  pairs
$(\widehat A, A)$,
as a corollary of our geometric considerations we obtain that

\begin{itemize}
\item[(i)] The class $\fM$  is a convex set with respect to addition;
\item[(ii)] The class $\fS$  is closed with respect to  multiplication,
$\fS\cdot \fS\subset \fS;
$
\item[(iii)] The subclass  $\fC\subset \fS$   is  a (double sided) ideal\footnote{We borrow this term from the ring theory. However, it worth perhaps mentioning that  the function class $\fS$ as an
algebraic structure is not a ring.} in the sense that
$$
\fC\cdot \fS=\fS\cdot \fC\subset \fC;
$$
\item[(iv)] The class $\fC$   is closed with respect to  multiplication:
$\fC\cdot\fC\subset \fC.$

\end{itemize}

The closedness of the class $\fS$ under multiplication (ii)  is a scalar variant of the multiplication theorem in the unbounded setting.
The multiplication theorem for bounded operators was originally obtained   in 1950 by
 M.~S.~Liv\v{s}ic and V.~ P.~Potapov \cite{LP}, who in particular
  established that the
product of two characteristic matrix-valued functions of bounded operators coincides with   the
matrix-valued  characteristic function of  a bounded
operator. After this  the result  has been  extended
 to the case of operator colligations (systems) \cite {ABT}, \cite{Br}, \cite {Brod}, \cite{BL58}, \cite{L2}, \cite{LYa}.

 The paper is organized as follows.

In Section 2, we recall the definitions and briefly discuss various  properties of the  Liv\v{s}ic, Weyl-Titchmarch and the characteristic functions.

 In Section 3, we introduce a coupling  of two symmetric operators defined as a
  symmetric extension   with deficiency indices $(1,1)$  of the direct sum of two symmetric operators $\dot A_1$ and $\dot A_2$ acting in the Hilbert spaces $\cH_1$ and $\cH_2$
  and then we explicitly compute the Liv\v{s}ic function of  the coupling (see Theorem \ref{technich0}).

 In Section 4, we prove  the {\it Addition Theorem for the Weyl-Titchmarsh functions} (see Theorem \ref{additionth}).

 In Section 5,  we develop  a variant of the extension theory with constrains, introduce a concept of the  operator coupling of  two unbounded  dissipative operators, discuss its properties,
 and prove  the {\it  Multiplicativity  of  the von Neumann extension parameter} (see Theorem \ref{nachalo}).

 In Section 6, we prove the   {\it Multiplication Theorem for the characteristic functions}  (see Theorem \ref{opcoup}).
 We also illustrate the corresponding  geometric constructions by  an  example of the differentiation operator on a finite interval
(see  Example \ref{exam}).

 In Appendix $A$,  a differentiation operator on a finite  interval is treated in detail (also see \cite{AkG} for  a related exposition).

\section{Preliminaries}
Throughout this paper we assume the following hypothesis.

\begin{hypothesis}\label{hyp}
Suppose that $\dot A$ is a densely defined symmetric operator $\dot A$ with deficiency indices $(1,1)$  and $A$ its self-adjoint extension.
Assume that  the deficiency elements  $g_\pm\in \Ker( (\dot A)^*\mp iI)$ are chosen in such a way that $\|g_+\|=\|g_-\|=1$
and that
\begin{equation}\label{ddff}
 g_{+}-g_-\in \Dom(A).
  \end{equation}
\end{hypothesis}
  \subsection{The Liv\v{s}ic function and the class $\fC$}
  Under Hypothesis \ref{hyp},  introduce    the  Liv\v{s}ic function $s=s(\dot A, A)$ of the symmetric operator $\dot A$ relative to  the self-adjoint extension $A$ by
 \begin{equation}\label{charfunk}
s(\dot A, A )(z)=\frac{z-i}{z+i}\cdot \frac{(g_z, g_-)}{(g_z, g_+)}, \quad z\in \bbC_+,
\end{equation}
where $g_z$, $z\in \bbC_+$,  is   an arbitrary deficiency element,  $0\ne g_z\in \Ker((\dot A)^*-zI )$.

We remark that  from the definition it follows that the dependence of
  the Liv\v{s}ic function
   $s(\dot A, A)$ on the reference (self-adjoint) operator
  $A$
 reduces to  multiplication by a $z$-independent
 unimodular factor whenever $A$ changes. That is,
 \begin{equation}\label{transs}
 s(\dot A, A_\alpha)=e^{-2i\alpha}s(\dot A, A), \quad \alpha \in [0,\pi),
 \end{equation}whenever the  self-adjoint  reference extension $A_\alpha$ of $\dot A$ has the property
\begin{equation}\label{dom1}
g_+-e^{2i\alpha}g_-\in \Dom (A_\alpha).
\end{equation}

Denote by $\fC$
the class of all  analytic mappings from $\bbC_+$ into the unit disk $\bbD$ that can be realized as the  Liv\v{s}ic function
associated with  some pair $(\dot A, A)$.

The class $\fC$ can be   characterized as follows (see \cite{L}). An
 analytic mapping $s$ from the upper-half plane into the unit disk
belongs to the class $\fC$, $s\in \fC$, if and only if
\begin{equation}\label{vsea0}
s(i)=0\quad \text{and}\quad \lim_{z\to \infty}
z(s(z)-e^{2i\alpha})=\infty \quad \text{for all} \quad  \alpha\in
[0, \pi),
\end{equation}
$$
0< \varepsilon \le \text{arg} (z)\le \pi -\varepsilon.
$$

\subsection{The Weyl-Titchmarsh function and the class $\fM$}

Define
 the Weyl-Titchmarsh function $M(\dot A, A)$
associated with the pair $(\dot A, A )$  as
\begin{equation}\label{1}
M(\dot A, A)(z)=
\left ((Az+I)(A-zI)^{-1}g_+,g_+\right ), \quad z\in \bbC_+.
\end{equation}

Denote by $\fM$ the class of all analytic mapping from $\bbC_+$ into itself  that can be realized  as the Weyl-Titchmarsh function $M(\dot A, A)$ associated with a pair $(\dot A, A)$.

 As for the characterization of the class $\fM$, we recall  that $M\in \fM$ if and only if $M$ admits the representation (see  \cite{D}, \cite{FKE}, \cite{GT},
 \cite {MT})
 \begin{equation}\label{hernev0}
M(z)=\int_\bbR \left
(\frac{1}{\lambda-z}-\frac{\lambda}{1+\lambda^2}\right )
d\mu,
\end{equation}
where $\mu$ is an  infinite Borel measure   and
 \begin{equation}\label{hernev01}
\int_\bbR\frac{d\mu(\lambda)}{1+\lambda^2}=1\,,\quad\text{equivalently,}\quad M(i)=i.
\end{equation}

It is worth mentioning  (see, e.g., \cite{MT}) that the Liv\v{s}ic and Weyl-Titchmarsh functions are related by the Cayley transform
\begin{equation}\label{blog}
s(\dot A, A)(z)=\frac{M(\dot A, A)(z)-i}{M(\dot A,A)(z)+i},\quad z\in \bbC_+.
\end{equation}

Taking this into account, one can show  that  the  properties \eqref{vsea0} and \eqref{hernev0},
\eqref{hernev01} imply  one another  (see, e.g.,  \cite{MT}).

 Combining  \eqref{transs}, \eqref{dom1}  and \eqref{blog} shows  that
the corresponding transformation law for the Weyl-Titchmarsh functions reads as (see  \cite{D}, \cite{FKE}, \cite{GT})
\begin{equation}\label{transm}
M(\dot A, A_\alpha)=\frac{\cos \alpha \, M(\dot A, A)-\sin \alpha}{
\cos\alpha +\sin \alpha \,M(\dot A, A)},
\quad \alpha \in [0,\pi).
\end{equation}

In view of \eqref{blog}, the function classes $\fC$ and $\fM$ are related by the Cayley transform,
$$\fC=K\circ\fM,
$$
where
$$
K(z)=\frac{z-i}{z+i}, \quad z\in \bbC.
$$That is,
$$\fC=\{K\circ M\, |\, M\in \fM\},
$$ where $ K\circ M$ denotes  the composition of the functions $K$  and $M$.

Moreover, the transformation law \eqref{transs}   shows that the class $\fC$ is closed under multiplication by a unimodular constant,
\begin{equation}\label{theta}\theta \cdot \fC = \fC,\quad |\theta|=1.
\end{equation}
Accordingly,  from \eqref{transm} one concludes that  the class $\fM$ is closed under the action of a one parameter
subgroup  of $SL(2, \bbR)$ of linear-fractional transformations
$$
K_\alpha\circ \fM=\fM,
\quad \bbR\ni  \alpha  \mapsto   K_\alpha,$$
 given by  $$
K _\alpha(z)= \frac{\cos \alpha \, z-\sin \alpha}{
\cos\alpha +\sin \alpha \,z} .
$$

  \subsection{The von Neumann extension parameter of a dissipative operator}

 Denote by  $\fD$
the set of all maximal  dissipative unbounded operators $\widehat A$  such that the restriction  $\dot A$ of $\widehat A$ onto $ \Dom(\widehat A)\cap\Dom((\widehat A)^*)$ is a densely defined symmetric operators with indices $(1,1)$.

Given $\widehat A\in \fD$ and  a self-adjoint (reference) extension $A$ of the underlying symmetric operator $\dot A=\widehat A|_{\Dom(\widehat A)\cap\Dom((\widehat A)^*)}$, assume that the pair $(\dot A, A)$ satisfies Hypothesis \ref{hyp}
with some $g_\pm$ taken from the corresponding deficiency subspaces,
 so that
$$
g_+-g_-\in \Dom ( A).$$

In this case,
 \begin{equation}\label{unkappa}
g_+-\kappa g_-\in \Dom ( \widehat A)
\quad \text{for some } \quad \kappa \in \bbD.
\end{equation}

\begin{definition}
We call $\kappa=\kappa(\widehat A, A)$ the von Neumanmn extension parameter of the dissipative operator $\widehat A\in \fD$ relative to the reference self-adjoint operator $A$.
\end{definition}

\subsection{ The characteristic function of a dissipative operator  and the class $\fS$}
Suppose that $\widehat A\in \fD$ is a maximal dissipative operator,  $\dot A=\widehat A|_{\Dom(\widehat A)\cap\Dom((\widehat A)^*)}$ its symmetric restriction, and $A$ is a reference self-adjoint extension of $\dot A$.
Following \cite{L} (also see \cite{AkG}, \cite{MT}) we define
  the characteristic
function $S=S(\widehat A, A)$ of the dissipative operator $\widehat A$ relative to the reference self-adjoint operator $A$   as
\begin{equation}\label{ch1}
S(z)=\frac{s(z)-\kappa}
{\overline{ \kappa }\,s(z)-1},\quad z\in \bbC_+,
\end{equation}
where  $s=s(\dot A, A)$ is  the Liv\v{s}ic function  associated with the pair $(\dot A, A)$ and the complex  number $\kappa=\kappa(\widehat A, A)$ is the von Neumann extension parameter of $\widehat A$ (relative to $A$).

We stress that for a dissipative operator $\widehat A\in \fD$ one always has that
\begin{equation}\label{dota}
\Dom(\widehat A)\ne \Dom((\widehat A)^*)
\end{equation}
and, moreover,  the underlying densely defined symmetric  operator $\dot A$ can uniquely  be recovered by  restricting  $\widehat A$
on $$\Dom (\dot A)=\Dom (\widehat A)\cap \Dom\left ( ( \widehat A)^*\right  ).$$
This  explains why  it is more natural to associate  the characteristic function with the pair
$(\widehat A, A)$ rather than with   the triple $(\dot A, \widehat A, A)$ which would  perhaps be more pedantic.

  The class of all analytic mapping from $\bbC_+$ into the unit disk consisting of all the characteristic functions
  $S(\widehat A, A)$  associated with arbitrary  pairs $ (\widehat A, A)$,  with  $\widehat A\in \fD$ and $A$  a reference self-adjoint extension of  the underlining symmetric operator  $\dot A$, will be denoted by $\fS$.

As in the case of the class $\fC$, the class $\fS$  is also closed under multiplication by a constant unimodular factor (cf. \eqref{theta}), that is,
  $$
  \theta \cdot \fS=\fS,\quad |\theta|=1.
  $$

 Indeed,  if $S\in \fS$,  then
    $$
  S=\frac{s-\kappa}
{\overline{ \kappa }\,s-1} \quad \text{for some } s\in \fC, \,\,\kappa\in \bbD.
  $$
Therefore,  $$
  \theta \cdot S=\frac{\theta \cdot  s- \theta \kappa}
{\overline{ \theta \kappa }\,\theta \cdot s-1}, \quad |\theta|=1.
  $$
  Since the class $\fC$ is closed under multiplication by a constant unimodular factor, $\theta\cdot  s\in \fC$ and since $|\theta \kappa|<1$,
  by definition \eqref{ch1}, the function
  $\theta \cdot S$
   belongs to the class $\fS$ as well.

     Now it is easy to see that the class $\fS$ coincides with the orbit of the class $\fC$ under the action
  of the group   of  automorphisms  $\text{Aut} (\bbD)$ of the complex unit disk. That is,
  \begin{equation}\label{kcirc}
  K\circ \fC=\fS,
\quad K\in\text{Aut} (\bbD).
  \end{equation}

  In particular, one obtains that
   \begin{equation}\label{subsub}
  \fC\subset \fS.
  \end{equation}

From \eqref{kcirc} and \eqref{subsub} follows that the class $\fS$  is closed under the action of the
group  $\text{Aut} (\bbD)$, that is,
$$
K\circ \fS=\fS,
\quad K\in\text{Aut} (\bbD). $$

\subsection{The  unitary invariant $\widehat \kappa:\fD\to [0,1)$}\label{ssylka}  Combining  \eqref{vsea0} and \eqref{ch1} shows that the value  of the von Neumann extension  parameter  $\kappa (\widehat A, A)$ can also  be recognized  as the value of the characteristic function at the point $z=i$, that is,
 \begin{equation}\label{kappa1}\kappa=\kappa(\widehat A, A)=S(\widehat A, A)(i).
 \end{equation}
Since by  Liv\v{s}ic theorem \cite[Theorem 13]{L}   the characteristic function  $S(\widehat A, A)$ determines  the pair $(\widehat A, A)$ up to unitary equivalence provided that the underlining symmetric operator $\dot A$ is prime, cf. \cite{MT}, the parameter $\kappa$ is a unitary invariant of the pair $(\widehat A , A)$.

It is important to notice that
the absolute value $\widehat \kappa(\widehat A)=|\kappa(\widehat A, A)|$  of the von Neumann extension parameter
 is independent  of the choice of the reference self-adjoint extension $A$. Therefore,
 the following functional
  $$\widehat \kappa :\fD\to [0,1)
  $$
  of the form
  $$ \widehat \kappa=\widehat \kappa(\widehat A)=|\kappa(\widehat A, A)|, \quad  \widehat A\in \fD,
  $$  is well defined  as one of   the geometric  unitary  invariants of a dissipative operator from the class $\fD$.

The kernel of the functional $\widehat \kappa$ can be characterized as follows.

The  inclusion \eqref{subsub} shows that
  any Liv\v{s}ic function  $s(\dot A, A)$ can be indentified with  the characteristic function $S(\widehat A,A')$ associated with  some pair $(\widehat A, A')$ where  $\widehat A\in \fD$ and $A'$ is  an appropriate self-adjoint reference extension of the symmetric operator  $\dot A
  =\widehat A|_{\Dom(\widehat A)\cup \Dom ((\widehat A)^*)}$.

To be more specific, it suffices to take
   the maximal  dissipative extension $\widehat A$ of $\dot A$
    with the   domain
  \begin{equation}\label{vanish}
  \Dom(\widehat A)=\Dom(\dot A)\dot +\Ker ((\dot A)^*-iI)
  \end{equation}
and to choose   the reference self-adjoint operator
    $A'$ in such a way that
   $$
   s(\dot A, A)=-s(\dot A, A').
   $$
   This is always possible due to  \eqref{transs}. Since $\widehat \kappa(\widehat A)=0$ (combine \eqref{unkappa} and \eqref{vanish}), it is easy to see that
  $$
 S	(\widehat A, A')=-s(\dot A, A')=s(\dot A,A)
  $$
which proves the claim.

  The subclass of  maximal  dissipative extensions $\widehat A$ with the property  \eqref{vanish} will be denoted by $\dot \fD$. That is, 		
  \begin{equation}\label{dotD}
  \dot \fD=\{\widehat A\in \fD\,|\, \widehat \kappa(\widehat A)=0\}\subset \fD,
 \end{equation}
and, therefore,
 $$
 \dot  \fD=\Ker ( \widehat \kappa).
 $$

\section{Symmetric extensions of the direct sum of symmetric operators}

Suppose that  $\dot A_1$ and
$\dot A_2$ are densely defined  symmetric operators  with deficiency indices $(1,1)$ acting in the Hilbert spaces $\cH_1$ and $\cH_2, $
respectively.

In accordance with the von Neumann extensions theory,
the set of all symmetric extensions $\dot A$
 with deficiency indices $(1,1)$
of the direct sum of the  symmetric operators $\dot A_1\oplus\dot A_2$
is in one-to-one correspondence with the set of
 one-dimensional {\it neutral}
subspaces $\cL$ of the quotient space
$\Dom ((\dot A_1\oplus\dot A_2)^*)\slash
\Dom (\dot A_1\oplus\dot A_2)$
such that  the adjoint operator
$(\dot A_1\oplus\dot A_2)^*$ restricted on $\cL$ is symmetric,
that is,
$$
\Im ((\dot A_1\oplus \dot A_2)^*f, f)=0, \quad f\in \cL.
$$
The above mentioned correspondence can be established in the following way: given $\cL$, the corresponding  symmetric operator
$
\dot A
$
is determined by the restriction of
$(\dot A_1\oplus \dot A_2)^*
$ on
 $$
\Dom(\dot A)=\Dom(\dot A_1)\oplus \Dom (\dot A_2)\dot +\cL,
$$ and vice versa.

Our  main technical result   describes
 the geometry of the deficiency subspaces of the
symmetric extensions $\dot A$ associated with  a two-parameter family of
neutral subspaces  $\cL$.  We also explicitly obtain the Liv\v{s}ic function of  these symmetric extensions  $\dot A$ relative to an
 appropriate self-adjoint extension of $\dot A$.

\begin{theorem}\label{technich0} Assume that  $\dot A_k$, $k=1,2$,
 are closed
symmetric operators with deficiency indices $(1,1)$ in the Hilbert spaces
$\cH_k$,  $k=1,2$.
Suppose that
$
g_k\in \Ker ((\dot A_k)^*\mp  iI)$, $   \|g_\pm^k\|=1$, $  k=1,2$.

Introduce   the  one-dimensional
 subspace $\cL\subset \cH_1\oplus \cH_2$  by
$$
\cL
=\linspan\left \{ (\sin \alpha g_+^1-\sin \beta  g_-^1)
\oplus(\cos \alpha
g_+^2- \cos\beta g_-^2 )
\right \},
$$
$$\alpha, \beta\in [0, \pi).
$$

Then \begin{itemize}
\item[(i)]
the linear set $\cL$ is a neutral subspace  of the quotient  space
$$\Dom ((\dot A_1\oplus\dot A_2)^*)\slash
\Dom (\dot A_1\oplus\dot A_2),$$
\item[(ii)] the  restriction $\dot A$ of
the operator  $(\dot A_1\oplus \dot A_2)^*$
on the domain
$$
\Dom(\dot A)=\Dom (\dot A_1)\oplus \Dom (\dot A_2)\dot +\cL
$$
is a symmetric operator with deficiency indices $(1,1)$
and the deficiency subspaces of $\dot A$  are given by
$$
\Ker ((\dot A)^*\mp iI)=\linspan \{ G_\pm \},
$$
where
\begin{equation}\label{G+G-}
G_+=  \cos  \alpha g_+^1-\sin  \alpha g_+^2
\,\, \text{ and } \,\,
G_-= \cos \beta  g_-^1- \sin \beta g^2_-,
\end{equation}
$$\|G_\pm\|=1;
$$

\item[(iii)] the Liv\v{s}ic function
$s=s(\dot A, A)$ associated with the pair  $(\dot A, A)$, where $A$ is a reference   self-adjoint extension of $\dot A$
such that
$$G_+-G_-\in \Dom(A),
$$
admits the representation
\begin{equation}\label{formula}
s(z)=\frac{\cos \alpha \cos \beta  s_1(z)- s_1(z)s_2(z)
+ \sin \alpha \sin \beta  s_2(z)
}
{1   - (\sin \alpha \sin \beta s_1(z)+
\cos  \alpha \cos  \beta s_2(z) ) },\quad z\in \bbC_+.
\end{equation}
Here
$s_k=s(\dot A_k, A_k)
$
are the Liv\v{s}ic functions associated with the pairs $(\dot A_k, A_k)$, $k=1,2.$
\end{itemize}

\end{theorem}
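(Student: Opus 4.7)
The plan is to carry out all three parts by systematic use of the boundary form of the adjoint $(\dot A_1\oplus\dot A_2)^*$. If $h,h'\in \Dom((\dot A_1\oplus\dot A_2)^*)$ are written in the von Neumann direct-sum decomposition as $h=h_0+a_1 g_+^1+b_1 g_-^1+a_2 g_+^2+b_2 g_-^2$ (and analogously for $h'$), then the boundary form evaluates to
\begin{equation*}
[h,h']:=\langle(\dot A_1\oplus\dot A_2)^*h,h'\rangle-\langle h,(\dot A_1\oplus\dot A_2)^*h'\rangle
=2i\sum_{k=1}^{2}\bigl(a_k\bar a_k'-b_k\bar b_k'\bigr),
\end{equation*}
where the unit normalization $\|g_\pm^k\|=1$ has been used. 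Part (i) is immediate: substituting the coefficients $a_1=\sin\alpha$, $b_1=-\sin\beta$, $a_2=\cos\alpha$, $b_2=-\cos\beta$ of the generator $f_\cL$ gives $[f_\cL,f_\cL]=2i(\sin^2\alpha-\sin^2\beta+\cos^2\alpha-\cos^2\beta)=0$, so $\cL$ is neutral and $\dot A$ is symmetric.

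For (ii), the adjoint $(\dot A)^*$ is the restriction of $(\dot A_1\oplus\dot A_2)^*$ to $\{h:[h,f_\cL]=0\}$, and any eigenvector of $(\dot A)^*$ with eigenvalue $\pm i$ must lie inside $\Ker((\dot A_1)^*\mp iI)\oplus\Ker((\dot A_2)^*\mp iI)$. Writing $G_+=c_1 g_+^1+c_2 g_+^2$, the condition $[G_+,f_\cL]=0$ reduces via the boundary-form identity to the scalar equation $c_1\sin\alpha+c_2\cos\alpha=0$, whose one-dimensional solution space is spanned by $(\cos\alpha,-\sin\alpha)$; an analogous calculation for $G_-=d_1 g_-^1+d_2 g_-^2$ yields $d_1\sin\beta+d_2\cos\beta=0$ and the claimed formula for $G_-$. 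The unit norms $\|G_\pm\|=1$ follow automatically from $\cos^2+\sin^2=1$ together with $\cH_1\perp\cH_2$, and the one-dimensionality of the solution spaces gives deficiency indices $(1,1)$.

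For (iii), I proceed from the Liv\v{s}ic formula $s(\dot A,A)(z)=\frac{z-i}{z+i}\,\frac{(g_z,G_-)}{(g_z,G_+)}$. Take $g_z=c_1 h_z^1+c_2 h_z^2$ with $h_z^k\in \Ker((\dot A_k)^*-zI)$ normalized by $(h_z^k,g_+^k)=1$, so that directly from the definition of $s_k$ one has $(h_z^k,g_-^k)=\frac{z+i}{z-i}s_k(z)$. The crucial intermediate step, obtained by pairing $h_z^k$ against $g_\pm^k$ in the boundary form and using $(\dot A_k)^*h_z^k=zh_z^k$, is the identification of the von Neumann coefficients of $h_z^k$ as
\begin{equation*}
a_z^k=\frac{z+i}{2i},\qquad b_z^k=-\frac{z+i}{2i}\,s_k(z),\qquad\text{i.e., }b_z^k=-a_z^k s_k(z).
\end{equation*}
Imposing $[g_z,f_\cL]=0$ then collapses to the single linear constraint $c_1(\sin\alpha-\sin\beta\,s_1)+c_2(\cos\alpha-\cos\beta\,s_2)=0$, which I resolve by $c_1=\cos\alpha-\cos\beta\,s_2$ and $c_2=-(\sin\alpha-\sin\beta\,s_1)$. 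Substituting into the inner products $(g_z,G_\pm)$, the factor $\frac{z+i}{z-i}$ cancels the prefactor in the Liv\v{s}ic formula, and two applications of $\sin^2+\cos^2=1$ condense the numerator to $\cos\alpha\cos\beta\,s_1-s_1s_2+\sin\alpha\sin\beta\,s_2$ and the denominator to $1-(\sin\alpha\sin\beta\,s_1+\cos\alpha\cos\beta\,s_2)$, yielding \eqref{formula}. The main obstacle is precisely this bookkeeping: one must simultaneously track three different expansions of $h_z^k$ (as a $z$-eigenvector of $(\dot A_k)^*$, as an element of the von Neumann direct sum, and via the Liv\v{s}ic inner-product ratio), and the entire argument hinges on the clean proportionality $b_z^k=-a_z^k s_k(z)$, which is exactly what translates the analytic object $s_k$ into the algebraic structure of the boundary form.
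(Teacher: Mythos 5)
Your proof is correct, and it reaches the same three conclusions by the same overall strategy as the paper: verify neutrality of $\cL$, identify $G_\pm$ by orthogonality to the generator of $\cL$, and compute $s(\dot A,A)$ by parametrizing $\Ker((\dot A)^*-zI)$ inside $\Ker((\dot A_1)^*-zI)\oplus\Ker((\dot A_2)^*-zI)$ and imposing one linear constraint. The difference is the computational vehicle: you run everything through the abstract von Neumann boundary form $[h,h']=2i\bigl(\sum_k a_k\bar a_k'-\sum_k b_k\bar b_k'\bigr)$ and the proportionality $b_z^k=-a_z^k\,s_k(z)$, whereas the paper never invokes that formula and instead verifies $\Im((\dot A_1\oplus\dot A_2)^*f,f)=0$ and the relations $((\dot A+iI)y,G_+)=0$, $(G_z,(\dot A-\bar zI)f)=0$ by direct inner-product computations using $(\dot A_k)^*g_\pm^k=\pm i g_\pm^k$, normalizing $G_z=g_z^1+T(z)g_z^2$ and solving for $T(z)$ so that the Liv\v{s}ic functions $s_k$ emerge as ratios $\frac{z-i}{z+i}\frac{(g_z^k,g_-^k)}{(g_z^k,g_+^k)}$. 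Your route buys cleaner bookkeeping (the cross terms that the paper must show are real in part (i) are absent from the boundary-form identity, and the symmetric parametrization $c_1=\cos\alpha-\cos\beta\,s_2$, $c_2=-(\sin\alpha-\sin\beta\,s_1)$ makes the final cancellation via $\sin^2+\cos^2=1$ transparent); the paper's route is more self-contained, using only the definition of the Liv\v{s}ic function and elementary inner products. Both correctly yield \eqref{G+G-} and \eqref{formula}.
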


%%%%%%%%%%%%%%%%%%%%%%%%%%%%%%%%%%%%%%%%%%%
%%%%%%%%%%%%%%  PROOF %%%%%%%%%%%%%%%%%%%%%%%%
%%%%%%%%%%%%%%%%%%%%%%%%%%%%%%%%%%%%%%%%%%%

\begin{proof} (i).
First we note that the element $f\in \cL\subset \dom (\dot A)$ given by
\begin{equation}\label{stali1}
f=(\sin \alpha g_+^1-\sin \beta  g_-^1)
+(\cos \alpha
g_+^2- \cos\beta g_-^2 )
\end{equation}
belongs to $ \dom ((\dot A_1\oplus \dot A_2)^*)$
and that
\begin{equation}\label{stali2}
(\dot A_1\oplus \dot A_2)^*f=i(\sin \alpha g_+^1+\sin \beta  g_-^1
+\cos \alpha
g_+^2+\cos\beta g_-^2 ).
\end{equation}
Combining \eqref{stali1} and \eqref{stali2}, one obtains
\begin{align*}
((\dot A_1\oplus \dot A_2)^*f,f)=&i(\sin^2\alpha-\sin^2\beta+\cos^2\alpha-\cos^2\beta)
\\ \,\, &
+i\sin\alpha \sin \beta( (g^1_-, g^1_+) -(g^1_+, g^1_-))
\\ \,\, &+i
\cos \alpha \cos  \beta( (g^2_-, g^2_+) -(g^2_+, g^2_-)).
\end{align*}
Hence,
$$
\Im ((\dot A_1\oplus \dot A_2)^*f,f)=0
,\quad f\in \cL,$$
and therefore
$$
\Im ((\dot A_1\oplus \dot A_2)^*f,f)=0,\quad \text{ for all } \,\,
 f\in \dom(\dot A),
$$
which proves that   the operator $\dot A$ is symmetric and $(i)$ follows.

 (ii).  Let us show that
$$
\Ker ((\dot A)^*-iI)=\linspan \{ G_+ \}.
$$
We need to check that
$$
((\dot A+iI)y,G_+ )=0 \quad \text{for all}\quad  y\in\Dom(\dot A).
$$
Take a $y\in \Dom(\dot A)$. Then $y$ can be decomposed as
$$
y=h_1+h_2+Cf,
$$
where $h_k\in \Dom (\dot A_k)$, $k=1,2$,  $C\in \bbC$, and
\begin{equation}\label{fff}
f=(\sin \alpha g_+^1-\sin \beta  g_-^1)
\oplus(\cos \alpha
g_+^2- \cos\beta g_-^2 )\in \cL.
\end{equation}
Next,
\begin{align}
((\dot A+iI)y,G_+ )&=(\dot A+iI)(h_1+h_2+Cf), G_+)
\label{okm1}\\&=
((\dot A_1+iI)h_1\oplus(\dot A_2+iI)h_2),
 G_+)
\nonumber \\&+C((\dot A+iI)f,G_+).\nonumber
\end{align}
On the other hand, since $g_+^k\in \Ker ((\dot A_k)^*-iI)$, $k=1,2,$
\begin{align}
((\dot A_1+iI)h_1\oplus(\dot A_1+iI)h_2),
 G_+)&=
\cos  \alpha((\dot A_1+iI)h_1,g_+^1)
\label{okm2}\\&-\sin  \alpha
((\dot A_2+iI)h_2,g_+^2)=0.\nonumber
\end{align}
Now we can prove that
\begin{equation}\label{okm3}
((\dot A+iI)f,G_+)=0,\quad f\in \cL.
\end{equation}
Indeed,
\begin{align*}
((\dot A+iI)f&=((\dot A+iI)((\sin \alpha g_+^1-\sin \beta  g_-^1)
+(\cos \alpha
g_+^2- \cos\beta g_-^2 ))\\&=2i(\sin \alpha g_+^1
+\cos \alpha
g_+^2 )
\end{align*}
and since
\begin{equation}\label{g+g+}
G_+=  \cos  \alpha g_+^1-\sin  \alpha g_+^2,
\end{equation}
we have
 $$
((\dot A+iI)f,G_+)
=2i(\sin \alpha g_+^1
+\cos \alpha
g_+^2 ),\cos  \alpha g_+^1-\sin  \alpha g_+^2)=0.
$$
Combining \eqref{okm1}, \eqref{okm2} and \eqref{okm3}  proves that
$$
((\dot A+iI)y,G_+ )=0 \quad\text{for all} \quad y\in \Dom(\dot A).
$$
Therefore,
$$
G_+\in \Ker ((\dot A)^*-iI).
$$
In a similar way it follows  that $G_-$ given by
\begin{equation}\label{g-g-}G_-= \cos \beta  g_-^1- \sin \beta g^2_-
\end{equation}
generates the deficiency subspace $\Ker ((\dot A)^*+iI)$.

Since $\|g_\pm^{1}\|=\|g_\pm^2\|=1$ and the elements $g_\pm^1$ and $g_\pm^2$ are orthogonal to each other,
\eqref{g+g+}
and \eqref{g-g-} yield
 \begin{equation}\label{normm} \|G_\pm \|=1.
\end{equation}

(iii).  In order to  evaluate the Liv\v{s}ic function associated with  the pair $(\dot A, A)$,
choose  nontrivial elements $g_z^k\in \Ker ( (\dot A_k)^*-zI)$, $k=1,2$, $z\in \bbC_+$.

Suppose that for $z\in \bbC_+$ an element $G_z\ne 0$ belongs to the deficiency subspace $\Ker ( (\dot A)^*-zI)$.
Since  $\dot A \subset (\dot A_1\oplus \dot A_2)^*$, one gets that
$$
G_z=g_z^1+T(z)g_z^2\in \Ker ( (\dot A_1\oplus \dot A_2)^*-zI)
$$
for some function $T(z)$ (to be determined later).

Therefore, the Liv\v{s}ic function $s(z)=s(\dot A, A)(z)$ associated with the pair $(\dot A, A)$ admits the representation
\begin{align}
s(z)&=\frac{z-i}{z+i}\cdot \frac{(G_z, G_-)}{(G_z, G_+)}
=\frac{z-i}{z+i}\cdot \frac{(g_z^1+ T(z)g_z^2, \cos\beta  g_-^1- \sin \beta g^2_-)}
{(g_z^1+ T(z)g_z^2, \cos  \alpha g_+^1-\sin  \alpha g_+^2) }
\label{inis}\\
&=\frac{z-i}{z+i}\cdot
\frac{ \cos \beta (g^1_z, g^1_-)-T(z)\sin \beta  (g_z^2, g_-^2)
}
{ \cos  \alpha   (g^1_z, g^1_+)-T(z) \sin \alpha ( g_z^2, g_+^2)}.\nonumber
\end{align}

Since  $G_z\in \Ker ( (\dot A)^*-zI)$ implies that
\begin{equation}\label{G_Z}
(G_z, (\dot A-\overline{z}I)f)=0,
\end{equation}
where the element  $f\in \cL$ is given by \eqref{fff},
the equation \eqref{G_Z} yields  the following equation for determining  the function $T(z)$:
\begin{equation}\label{texn}
\left (g_z^1+T(z)g_z^2, (\dot A-\overline{z}I)\left (
(\sin \alpha g_+^1-\sin \beta g_-^1) \oplus
(\cos \alpha g_+^2- \cos \beta g_-^2)\right ) \right )=0.
\end{equation}
Since
$$\dot A \subset (\dot A_1\oplus \dot A_2)^*\quad \text{and}\quad
(\dot A_k)^* g^k_\pm =\pm i g^k_\pm, \quad k=1,2,
$$
from \eqref{texn} one gets that
\begin{align*}
&(-i-z) \sin \alpha (g_z^1, g_+^1)-(i-z)  \sin \beta(g_z^1, g_-^1)
\\
&+T(z)\left [(-i-z)\cos
\alpha (g_z^2, g_+^2) - (i-z)\cos \alpha (g_z^2, g_-^2)\right ]=0.
\end{align*}
Solving for $T(z)$, we have
\begin{align}
T(z)&=- \frac{(-i-z)\sin \alpha
(g_z^1, g_+^1)-(i-z)
 \sin \beta (g_z^1, g_-^1)}{(-i-z)
\cos \alpha (g_z^2, g_+^2) - (i-z)\cos \beta (g_z^2, g_-^2)}
\label{TTT}
\\
&
=-  \frac{(g_z^1, g_+^1)}{(g_z^2, g_+^2)}\cdot
\frac{
\sin \alpha -\sin \beta \frac{z-i}{z+i}\cdot
 \frac{ (g_z^1, g_-^1)}{(g_z^1, g_+^1)}}
{\cos \alpha
-\cos \beta\frac{z-i}{z+i}\cdot
 \frac{ (g_z^2, g_-^2)}{(g_z^2, g_+^2)}}
\nonumber \\&
=-  \frac{(g_z^1, g_+^1)}{(g_z^2, g_+^2)}\cdot
\frac{
\sin \alpha -\sin \beta s_1(z)}
{\cos \alpha
-\cos \beta  s_2(z)}. \nonumber
\end{align}

Therefore, taking into account \eqref{inis} and \eqref{TTT}, one arrives at the repsentation
\begin{align*}
s(z)&=\frac{z-i}{z+i}\cdot
\frac{ \cos \beta (g^1_z, g^1_-)+ \frac{(g_z^1, g_+^1)}{(g_z^2, g_+^2)}\cdot
\frac{
\sin \alpha -\sin \beta s_1(z)}
{\cos \alpha
-\cos \beta s_2(z)}\sin \beta  (g_z^2, g_-^2)
}
{ \cos  \alpha   (g^1_z, g^1_+)+ \frac{(g_z^1, g_+^1)}{(g_z^2, g_+^2)}\cdot
\frac{
\sin \alpha -\sin \beta s_1(z)}
{\cos \alpha
-\cos \beta s_2(z)}\sin \alpha ( g_z^2, g_+^2)}
\end{align*}
which, after a direct computation, yields \eqref{formula}.

The proof is complete.
\end{proof}

\begin{remark}\label{kogdage}  A straightforward computation using \eqref{formula}
shows that representation \eqref{formula} is a particular case (for $k=0$) of a more general  equality
  \begin{equation}\label{formula1}
\frac{s(z)-k}{
ks(z)-1}=\frac{a_1 s_1(z)+a_2 s_2(z)-
s_1(z)s_2(z)
-k}
{a_2 s_1(z)+a_1 s_2(z)-ks_1(z)s_2(z)-
1},\quad k\in [0,1).
\end{equation}
Here
$$
a_1=\cos \alpha \cos \beta +k\sin \alpha \sin \beta,
$$
$$
a_2=\sin \alpha \sin \beta+k\cos \alpha \cos \beta.
$$
\end{remark}

\section{The addition  theorem}

As the first application of Theorem \ref{technich0} we obtain the following addition theorem for the Weyl-Titchmarsh functions.

\begin{theorem}[{\bf The Addition Theorem}]\label{additionth}
Assume the hypotheses of Theorem \ref{technich0} with $\alpha=\beta.$
Suppose that $\dot A$ is the symmetric operator referred
to in Theorem
\ref{technich0}.

Then
the Weyl-Titchmarsh function $M$ associated with the pair
$(\dot A, A_1\oplus A_2)$
is  a convex combination
of  the Weyl-Titchmarsh functions  $M_k$
associated with the pairs $(\dot A_k, A_k)$, $k=1,2$,  which is given by
\begin{equation}\label{vypvyp}
M (z)=\cos^2 \alpha  \,\,M_1(z)+\sin^2 \alpha \,\,M_2(z),
\quad z\in \bbC_+.
\end{equation}

\end{theorem}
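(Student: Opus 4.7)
The plan is to exploit the block-diagonal structure of the reference self-adjoint extension $A_1\oplus A_2$ together with the explicit form of the unit deficiency element $G_+$ furnished by Theorem \ref{technich0}(ii); the definition \eqref{1} of the Weyl-Titchmarsh function then splits into a sum of two inner products, one in each factor space. Before invoking \eqref{1}, I first need to verify that $A_1\oplus A_2$ is an admissible reference self-adjoint extension of $\dot A$ in the sense of Hypothesis \ref{hyp}. By Theorem \ref{technich0}(ii) specialized to $\alpha=\beta$, the unit deficiency vectors are
$$G_\pm=\cos\alpha\,g_\pm^1-\sin\alpha\,g_\pm^2,$$
so
$$G_+-G_-=\cos\alpha\,(g_+^1-g_-^1)-\sin\alpha\,(g_+^2-g_-^2).$$
Applying Hypothesis \ref{hyp} to each pair $(\dot A_k,A_k)$ gives $g_+^k-g_-^k\in\Dom(A_k)$, hence $G_+-G_-\in\Dom(A_1)\oplus\Dom(A_2)=\Dom(A_1\oplus A_2)$.

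Next I would compute $M=M(\dot A,A_1\oplus A_2)$ directly from \eqref{1} with $g_+$ replaced by the unit deficiency vector $G_+=\cos\alpha\,g_+^1\oplus(-\sin\alpha)\,g_+^2\in\cH_1\oplus\cH_2$. Since the resolvent and the numerator operator decompose block-diagonally on $\cH_1\oplus\cH_2$, one has
$$((A_1\oplus A_2)z+I)((A_1\oplus A_2)-zI)^{-1}G_+=\cos\alpha\,R_1(z)g_+^1\oplus(-\sin\alpha)\,R_2(z)g_+^2,$$
where $R_k(z)=(A_kz+I)(A_k-zI)^{-1}$. Taking the inner product with $G_+$ and using $\cH_1\perp\cH_2$ to kill the cross terms, one obtains
$$M(z)=\cos^2\alpha\,(R_1(z)g_+^1,g_+^1)_{\cH_1}+\sin^2\alpha\,(R_2(z)g_+^2,g_+^2)_{\cH_2}=\cos^2\alpha\,M_1(z)+\sin^2\alpha\,M_2(z),$$
which is precisely \eqref{vypvyp}.

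There is essentially no obstacle: the identity is a one-line consequence of Theorem \ref{technich0}(ii) combined with the orthogonal-sum structure of the reference operator. The only subtlety worth mentioning is confirming the admissibility of $A_1\oplus A_2$ as a reference extension, done in the first paragraph. Alternatively, one could bypass the direct computation and instead specialize the Liv\v{s}ic-function formula \eqref{formula} at $\alpha=\beta$ and then apply the Cayley transform \eqref{blog}: writing $p=\cos^2\alpha$, $q=\sin^2\alpha$ (so $p+q=1$), formula \eqref{formula} collapses to $s=\frac{ps_1+qs_2-s_1s_2}{1-qs_1-ps_2}$, and a short algebraic manipulation shows $\frac{1+s}{1-s}=p\,\frac{1+s_1}{1-s_1}+q\,\frac{1+s_2}{1-s_2}$, which via $M=i\,\frac{1+s}{1-s}$ is equivalent to \eqref{vypvyp}. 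Either approach closes the proof.
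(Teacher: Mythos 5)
Your proof is correct, and your primary argument takes a genuinely different (and more direct) route than the paper's. The paper establishes \eqref{vypvyp} by specializing the Liv\v{s}ic-function formula \eqref{formula} of Theorem \ref{technich0}(iii) to $\alpha=\beta$ and then passing through the Cayley transform \eqref{blog}, reducing the claim to an algebraic identity between $M, M_1, M_2$ and $s, s_1, s_2$ which it states ``can be directly verified''; this is exactly the alternative route you sketch in your closing paragraph, and your intermediate identity $\frac{1+s}{1-s}=p\,\frac{1+s_1}{1-s_1}+q\,\frac{1+s_2}{1-s_2}$ does check out. Your main argument instead computes $M(\dot A, A_1\oplus A_2)$ straight from the definition \eqref{1}, using only part (ii) of Theorem \ref{technich0} (the explicit unit deficiency vector $G_+=\cos\alpha\,g_+^1\oplus(-\sin\alpha)\,g_+^2$), the block-diagonality of $\left((A_1\oplus A_2)z+I\right)\left((A_1\oplus A_2)-zI\right)^{-1}$, and the orthogonality $\cH_1\perp\cH_2$ to kill the cross terms. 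This bypasses the Liv\v{s}ic function and formula \eqref{formula} entirely, avoids any unverified algebra, and makes the convexity of the combination transparent (the weights $\cos^2\alpha$ and $\sin^2\alpha$ summing to $1$ is just $\|G_+\|=1$). You also correctly address the one point of admissibility --- that $G_+-G_-\in\Dom(A_1\oplus A_2)$ when $\alpha=\beta$, which together with $\cL\subset\Dom(A_1)\oplus\Dom(A_2)$ makes $A_1\oplus A_2$ a legitimate reference self-adjoint extension of $\dot A$ --- the same observation the paper makes before invoking Theorem \ref{technich0}.
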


%%%%%%%%%%%%%%%%%%%%%%%%%%%%%%%%%%%%%%%%%%%
%%%%%%%%%%%%%%  PROOF %%%%%%%%%%%%%%%%%%%%%%%%
%%%%%%%%%%%%%%%%%%%%%%%%%%%%%%%%%%%%%%%%%%%

\begin{proof} Since by hypothesis $\alpha=\beta$,
one concludes that
$$
G_+-G_-\in \Dom (A_1\oplus A_2),
$$
where $G_\pm$ are the deficiency elements of $\dot A$ from
Theorem \ref{technich0} given by \eqref{G+G-}.
So, one can apply
 Theorem \ref{technich0} with the self-adjoint reference operator
$
A=A_1\oplus A_2
$ to conclude that
$$
s (z)=
\frac{\cos^2 \alpha  (z)-s_1(z)s_2(z)
+ \sin^2 \alpha \, s_2(z)
}
{1   - (\sin^2 \alpha  s_1(z)+
\cos^2  \alpha s_2(z) ) },
$$
where
$$
s(z)=\frac{M(z)-i}{M (z)+i}\quad \text{ and }\quad
 s_k=\frac{M_k(z)-i}{M_k(z)+i}, \quad k=1,2.
$$

Thus, to prove \eqref{vypvyp} it remains to check the equality
\begin{align*}
&\frac{\cos^2 \alpha  \,\,M_1(z)+\sin^2 \alpha \,\,M_2(z)-i}{\cos^2 \alpha  \,\,M_1(z)+\sin^2 \alpha \,\,M_2(z)+i}
\\
&=\frac{\cos^2 \alpha  \frac{M_1(z)-i}{M_1(z)+i}-
\frac{M_1(z)-i}{M_1(z)+i}\frac{M_2(z)-i}{M_2(z)+i}
+ \sin^2 \alpha \frac{M_2(z)-i}{M_2(z)+i}
}
{1   - \left (\sin^2 \alpha \frac{M_1(z)-i}{M_1(z)+i}+
\cos^2 \alpha  \frac{M_2(z)-i}{M_2(z)+i} \right ) }
\end{align*}
which can be directly verified.
\end{proof}

\section{An operator coupling of dissipative operators}

We now introduce the  concept of the operator coupling of two dissipative unbounded operators.

\begin{definition}

Suppose   that
  $\widehat A_1\in \fD(\cH_1)$   and $\widehat A_2\in \fD(\cH_2)$ are maximal dissipative unbounded operators acting in the Hilbert spaces $\cH_1$ and $\cH_2$, respectively.

We say that a maximal dissipative  operator  $\widehat A\in \fD(\cH_1\oplus\cH_2)$ is an operator coupling   of $\widehat A_1$ and $\widehat A_2$,
in writing, $$\widehat A=\widehat A_1\uplus \widehat A_2,$$   if
\begin{itemize}
\item[(i)] the Hilbert space $\cH_1$ is invariant for $\widehat A_1$ and
   the restriction of $\widehat  A$ on $\cH_1$ coincides with the dissipative operator $\widehat A_1$, that is,
   $$
   \Dom(\widehat A)\cap \cH_1=\Dom (\widehat A_1),
   $$
$$
\widehat A|_{\cH_1\cap \Dom({\widehat A_1)}}=\widehat A_1,
$$
\end{itemize}
and
\begin{itemize}
\item[(ii)]the symmetric operator  $\dot A=   \widehat A|_{\Dom(\widehat A)\cap \Dom ((\widehat A)^*)}$ has the property
$$
\dot A\subset \widehat A_1\oplus (\widehat  A_2)^*.
 $$
 \end{itemize}
\end{definition}

To  justify the existence of an operator coupling of two dissipative operators and discuss properties of the concept
we proceed with  preliminary considerations.

Assume the following hypothesis.
\begin{hypothesis}\label{hyphyp} Suppose   that
  $\widehat A_1\in \fD(\cH_1)$   and $\widehat A_2\in \fD(\cH_2)$ are maximal dissipative unbounded operators acting in the Hilbert spaces $\cH_1$ and $\cH_2$, respectively.
Assume, in addition, that
 $$
\dot A_j=\widehat A_j|_{\Dom(\widehat A_j)\cap\Dom((\widehat A_j)^*)},\quad j=1,2,
$$
are  the corresponding underlying symmetric operators.
\end{hypothesis}

First we show that under Hypothesis \ref{hyphyp} the following
 {\bf extension problem with a constraint} admits a one-parameter family of solutions.

 This problem is:
 \begin{itemize}
 \item[]
{\it Find a closed symmetric operator $\dot A$  with deficiency indices $(1,1)$ such that }
 \begin{equation}\label{EPC}
\dot A_1\oplus \dot A_2\subset \dot A
 \quad \text{and }\quad
\dot A\subset \widehat A_1\oplus (\widehat  A_2)^*
\end{equation}
\end{itemize}

The lemma below justifies the solvability of the extension problem with a constraint.

\begin{lemma}\label{dlinnaia}
Assume Hypothesis \ref{hyphyp}. Then

\begin{itemize}
\item[(i)] there exists a one parameter family $[0, 2\pi)\ni \theta\mapsto \dot A_\theta$ of symmetric restrictions   with deficiency indices $(1,1)$ of the operator  $(\dot A_1\oplus\dot A_2)^*$ such that
$$
\dot A_1\oplus \dot A_2\subset \dot A_\theta\subset \widehat A_1\oplus (\widehat  A_2)^*,\quad \theta\in [0,2\pi);
 $$

\item[(ii)] if $\dot A$ is a closed symmetric operator with deficiency indices $(1,1)$ such that
 $$
\dot A_1\oplus \dot A_2\subset \dot A\subset \widehat A_1\oplus (\widehat  A_2)^*,
 $$
 then there exists a $\theta\in [0, 2\pi)$ such that
 $$\dot A=\dot A_\theta.
 $$
 \end{itemize}
 \end{lemma}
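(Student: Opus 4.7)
The strategy is to reduce the problem to a two-dimensional linear-algebra computation on the quotient $\Dom(\widehat A_1\oplus(\widehat A_2)^*)/\Dom(\dot A_1\oplus\dot A_2)$. Since $\widehat A_j\in\fD(\cH_j)$, its von Neumann extension parameter $\kappa_j$ lies strictly inside $\bbD$, so $\Dom(\widehat A_1)=\Dom(\dot A_1)\dot +\linspan\{v_1\}$ with $v_1=g_+^1-\kappa_1 g_-^1$, and $\Dom((\widehat A_2)^*)=\Dom(\dot A_2)\dot +\linspan\{v_2\}$ with $v_2=g_-^2-\overline{\kappa_2}\,g_+^2$. Hence the quotient is two-dimensional, and any closed symmetric $\dot A$ squeezed between $\dot A_1\oplus\dot A_2$ and $\widehat A_1\oplus(\widehat A_2)^*$ corresponds bijectively to a subspace $\cL\subset\linspan\{v_1,v_2\}$ via $\Dom(\dot A)=\Dom(\dot A_1\oplus\dot A_2)\dot +\cL$.

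Symmetry of $\dot A$ translates into the requirement that the boundary form
$$
\omega(u,w)=\tfrac{1}{2i}\bigl[((\dot A_1\oplus\dot A_2)^*u,w)-(u,(\dot A_1\oplus\dot A_2)^*w)\bigr]
$$
vanish identically on $\cL$. By the standard deficiency-index count (indices of $\dot A_1\oplus\dot A_2$ equal $(2,2)$ and adjoining a $k$-dimensional neutral subspace to the domain reduces each index by $k$), $\dot A$ has deficiency indices $(1,1)$ precisely when $\dim\cL=1$. I would then evaluate $\omega$ on $\linspan\{v_1,v_2\}$: since $v_1\in\cH_1$ and $v_2\in\cH_2$ lie in orthogonal subspaces and $(\dot A_1\oplus\dot A_2)^*$ respects this decomposition, the mixed terms vanish, $\omega(v_1,v_2)=\omega(v_2,v_1)=0$, while a direct computation using $(\dot A_j)^*g_\pm^j=\pm i g_\pm^j$ and $\|g_\pm^j\|=1$ gives
$$
\omega(v_1,v_1)=1-|\kappa_1|^2>0,\qquad \omega(v_2,v_2)=-(1-|\kappa_2|^2)<0,
$$
the strict signs being forced by $|\kappa_j|<1$.

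Thus $\omega$ restricted to $\linspan\{v_1,v_2\}$ is diagonal with signature $(+,-)$. The neutrality condition on $w=c_1 v_1+c_2 v_2$ then reads $|c_1|^2(1-|\kappa_1|^2)=|c_2|^2(1-|\kappa_2|^2)$, which fixes the ratio $|c_1|/|c_2|$ and leaves only a free phase. Setting $c_1=\sqrt{1-|\kappa_2|^2}$, $c_2=\sqrt{1-|\kappa_1|^2}\,e^{i\theta}$ produces a family of pairwise distinct one-dimensional neutral subspaces $\cL_\theta$, $\theta\in[0,2\pi)$; defining $\dot A_\theta$ as the restriction of $(\dot A_1\oplus\dot A_2)^*$ to $\Dom(\dot A_1\oplus\dot A_2)\dot +\cL_\theta$ proves (i), with closedness automatic because $\cL_\theta$ is finite-dimensional and $\dot A_1\oplus\dot A_2$ is closed. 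Part (ii) follows because any admissible $\dot A$ yields a one-dimensional neutral $\cL\subset\linspan\{v_1,v_2\}$, which by the signature computation must coincide with a unique $\cL_\theta$.

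The main obstacle is the signature calculation $\omega(v_1,v_1)>0>\omega(v_2,v_2)$: these strict inequalities are what force both the existence and the uniqueness of the $\theta$-parameterization, and they reflect the strict dissipativity of $\widehat A_1$ and the strict accumulativity of $(\widehat A_2)^*$, both a consequence of $|\kappa_j|<1$ for elements of $\fD$.
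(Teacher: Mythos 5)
Your proof is correct and follows essentially the same route as the paper: the paper's proof of part (ii) is precisely your signature computation on $\linspan\{f^1,f^2\}$, with $\Im(\widehat A_1 f^1,f^1)=1-\kappa_1^2>0$ and $\Im((\widehat A_2)^*f^2,f^2)<0$ forcing the neutral lines to form a one-phase family. The only cosmetic differences are that the paper establishes part (i) by an explicit construction through Theorem \ref{technich0} (it needs the $(\alpha,\beta)$-form of $\cL_0$ later anyway), and that your normalization $v_2=g_-^2-\overline{\kappa_2}\,g_+^2$ handles the case $\kappa_2=0$ uniformly where the paper splits into cases.
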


\begin{proof} First, introduce the notation.
Let  $\kappa_j$,  $ 0\le \kappa_j<1$, $j=1,2, $ stand for the absolute value of the von Neumann parameter of $\widehat A_j$,
$$\kappa_j=\widehat \kappa (\widehat A_j), \quad j=1,2.
$$
Fix a basis
$g_\pm^j \in \Ker ((\dot A_j)^*\mp iI)$, $\|g_\pm^j\|=1$, $j=1,2$, in the corresponding deficiency subspaces
such that
$$
g_+^j-\kappa_jg_-^j\in \Dom (\widehat A_j),
\quad j=1,2.
$$

(i). To  show that there exists at least one symmetric extensions $\dot A_0$  with deficiency indices $(1,1)$ of  $\dot A_1\oplus\dot A_2$
such that
$$
\dot A_0\subset \widehat A_1\oplus (\widehat  A_2)^*,
 $$
suppose that $\alpha, \beta \in \big [0, \frac{\pi}{2} \big )$ are chosen in such a  way that
\begin{equation}\label{tana1}
\alpha=\begin{cases}
\arctan \frac{1}{\kappa_2}\sqrt{\frac{1-\kappa_2^2}{1-\kappa_1^2}},&\text{if }  \kappa_2\ne 0\\
\frac\pi2,&\text{if }\kappa_2=0
\end{cases}
\end{equation}
and
\begin{equation}\label{tana2}
\beta = \begin{cases}
\arctan (\kappa_1\kappa_2 \tan \alpha),&\text{if }  \kappa_2\ne 0\\
\frac{\kappa_1}{\sqrt{1-\kappa_1^2}},&\text{if }  \kappa_2= 0
\end{cases}.
\end{equation}

By Theorem \ref{technich0} (i),
  the  one-dimensional subspace
\begin{equation}\label{netral}
\cL_0
=\linspan\left \{ (\sin \alpha g_+^1-\sin \beta  g_-^1)
\oplus(\cos \alpha
g_+^2- \cos\beta g_-^2 )
\right \}
\end{equation}
is  a neutral subspace  of the quotient  space
$$\Dom ((\dot A_1\oplus\dot A_2)^*) / \Dom (\dot A_1\oplus\dot A_2).
$$

By Theorem \ref{technich0} (ii), the  restriction $\dot A_0$ of
the operator  $(\dot A_1\oplus \dot A_2)^*$
on the domain
\begin{equation}\label{vbnm}
\Dom(\dot A_0)=\Dom (\dot A_1)\oplus \Dom (\dot A_2)\dot +\cL_0
\end{equation}
is a symmetric operator with deficiency indices $(1,1)$.

Taking into account the relations (see \eqref{tana1}, \eqref{tana2})
$$\sin \beta =\kappa_1\sin\alpha \quad \text{and }\quad \cos \beta=\frac{1}{\kappa_2}\cos \alpha,\quad \kappa_2\ne 0,
$$ and
$$\sin \beta =\kappa_1 \quad \text{and }\quad \cos \beta=\sqrt{1-\kappa_1^2},\quad  \kappa_2= 0,$$
from \eqref{netral} one obtains that the subspace $\cL_0$ admits the representation
$$\cL_0=
\begin{cases}
\linspan\left \{ \sin \alpha \,\, (g_+^1-\kappa_1 g_-^1)
\oplus\cos \alpha \left (
g_+^2- \frac{1}{\kappa_2}g_-^2 \right )\right \},&\text{if }\kappa_2\ne 0,
\\
\linspan\left  \{  (g_+^1-\kappa_1 g_-^1)
\oplus
\left (-\sqrt{1-\kappa_1^2}g_-^2 \right )\right \},&  \text{if } \kappa_2=0.
\end{cases}
$$
It follows that
$$\cL_0\subset \dom (\widehat A_1\oplus (\widehat A_2)^*).
$$
From \eqref{vbnm} one concludes  that the symmetric operator $\dot A_0$  has the property
\begin{equation}\label{111}\dot A_0\subset \widehat A_1\oplus (\widehat A_2)^*.
\end{equation}

Clearly, for any $\theta\in [0,2\pi)$ the subspace
\begin{equation}
\cL_\theta=
\begin{cases}\label{sravni}
\linspan\left \{ e^{i\theta}\sin \alpha \,\, (g_+^1-\kappa_1 g_-^1)
\oplus\cos \alpha \left (
g_+^2- \frac{1}{\kappa_2}g_-^2 \right )\right \},&\text{if }\kappa_2\ne 0,
\\
\linspan\left  \{  e^{i\theta} (g_+^1-\kappa_1 g_-^1)
\oplus
\left (-\sqrt{1-\kappa_1^2}g_-^2 \right )\right \},&  \text{if }  \kappa_2=0.
\end{cases}
\end{equation}
is  also a neutral subspace  of the quotient  space
$$\Dom ((\dot A_1\oplus\dot A_2)^*) / \Dom (\dot A_1\oplus\dot A_2).
$$
Therefore, the symmetric operator $\dot  A_\theta$ defined as  the restrictions of $(\dot A_1\oplus\dot  A_2)^*$
on
$$\Dom(\dot  A_\theta)=\dom (\dot A_1\oplus \dot A_2)\dot + \cL_\theta, \quad \theta\in [0,2\pi),
$$has deficiency indices $(1,1) $ and
$$
 (\dot A_1\oplus \dot A_2)\subset \dot A_\theta \subset (\widehat A_1\oplus (\widehat A_2)^*)\subset  (\dot A_1\oplus \dot A_2)^*, \quad \theta\in
 [0,2\pi),
$$
proving the claim (i).

(ii). Introduce the elements
\begin{equation}\label{eqref1}
f^1=g_+^1-\kappa_1g_-^1\in \Dom (\widehat A_1)
\subset \cH_1
\end{equation}
and
\begin{equation}\label{eqref2}
f^2=g_+^2-\kappa_2^{-1}g_-^2 \in
\Dom ((\widehat A_2)^*)\subset \cH_2 \quad (\kappa_2\ne0).
\end{equation}
If $\kappa_2=0$, then  we take
\begin{equation}\label{eqref3}
f^2=-\sqrt{1-\kappa_1^2}\,g_-^2 \in
\Dom ((\widehat A_2)^*)\subset \cH_2.
\end{equation}

A simple computation shows that
\begin{equation}\label{fedor}
\Im (\widehat A_1 f^1,f^1)=(1-\kappa_1^2)>0\end{equation}
and that
\begin{equation}\label{fedor2}
\Im ((\widehat A_2)^* f^2,f^2)=
\begin{cases}
1-\kappa_2^{-2},&\text{if } \kappa_2\ne 0\\
\kappa_1^{2}-1,&\text{if } \kappa_2=0
\end{cases}.
\end{equation}
Hence,
$
\Im ((\widehat A_2)^* f^2,f^2)<0
$.
Therefore, if
$
f=af^1+bf^2$,  $a,b\in \bbC$,
then
$$
\Im((\dot A_1\oplus \dot A_2)^*f,f)=
\begin{cases}
|a|^2(1-\kappa_1^2)+|b|^2(1-\kappa_2^{-2}),& \kappa_2\ne0\\
|a|^2(1-\kappa_1^2)-|b|^2(1-\kappa_1^{2}),& \kappa_2=0
\end{cases}.
$$

This means that   a one-dimensional subspace
$$\cL \subset\linspan \{f^1,f^2\}\subset  \Dom   (\widehat A_1)\oplus \Dom((\widehat A_2)^*)$$
 is a neutral (Lagrangian) subspace for the symplectic form
$$\omega(h,g)=((\dot A_1\oplus \dot A_2)^*h,g)-(h,(\dot A_1\oplus \dot A_2)^*g), \quad h,g\in \Dom((\dot A_1\oplus \dot A_2)^*)
$$
if and only if $\cL$ admits the representation
\begin{equation}\label{cl}\cL=\linspan\{e^{i\theta}\sin \alpha f^1\oplus\cos \alpha f^2\} ,
\end{equation}
for some $\theta\in [0,2\pi)$ where
$$\tan \alpha =\frac{\kappa_2^{-2}-1}{1-\kappa_1^2}=\frac{1}{\kappa_2}\sqrt{\frac{1-\kappa_2^2}{1-\kappa_1^2}}$$
if $\kappa_2\ne 0$, and
\begin{equation}\label{cl1}
\cL=\linspan\{e^{i\theta}f^1\oplus f^2\},
\end{equation}
if $\kappa_2=0$.

 Taking into account \eqref{eqref1}--\eqref{eqref3} and comparing  \eqref{cl} and \eqref{cl1} with \eqref{sravni}, one concludes that
\begin{equation}\label{vso}
\cL=\cL_\theta.
\end{equation}

By hypothesis (ii), $\dot A$ is a closed symmetric operator with deficiency indices $(1,1)$  and
 $$
\dot A_1\oplus \dot A_2\subset \dot A\subset \widehat A_1\oplus (\widehat  A_2)^*.
 $$
Therefore, the subspace
$$
\Dom(\dot A)\cap\Dom( \widehat A_1\oplus (\widehat  A_2)^*)
$$
is a neutral subspace.  Hence,  by \eqref{vso},
$$
\Dom(\dot A)=\Dom(\dot A_1\oplus \dot A_2)\dot +\cL_\theta \quad\text{for some }\quad \theta\in[0,2\pi)
$$
which means that
$$\dot A=\dot A_\theta
$$
proving the claim (ii).

The proof is complete.
\end{proof}

Our next result,  on  the one hand, shows that given   a solution $\dot A$ of the extension problem with a constraint \eqref{EPC}, there exists a unique
operator coupling $\widehat A_1\uplus\widehat A_2$ of  $\widehat A_1$ and
$\widehat A_2$ such that
$$\dot A\subset \widehat A_1\uplus\widehat A_2.
$$
On the other hand, this result justifies that
  the functional
$$\widehat \kappa :\fD\to [0,1)$$
introduced in subsection \ref{ssylka}  is  multiplicative with respect to
the operator coupling operation.

\begin{theorem}[{\bf Multiplicativity of  the  extension parameter}]\label{nachalo}
Assume Hypothesis \ref{hyphyp}.
Suppose, in addition,  that  $\dot A$ is a solution of the extension problem with a constraint \eqref{EPC}.

 Then

 \begin{itemize}
 \item[(i)] there exists a unique operator coupling   $\widehat A=\widehat A_1\uplus \widehat A_2\in \fD(\cH_1\oplus \cH_2)$  such that
  $$
 \widehat A|_{\Dom (\widehat A)\cap\dom( \widehat A)^*)}=\dot A;
 $$

\item[(ii)]  for any operator coupling $\widehat A$ of $\widehat A_1$ and $\widehat A_2$, the multiplication rule
\begin{equation}\label{multkappa1}\widehat \kappa(\widehat A)=\widehat \kappa (\widehat A_1)\cdot \widehat \kappa(\widehat A_2)
\end{equation}
holds.
Here $\widehat \kappa(\cdot )$ stands for  the absolute value of the von Neumann parameter of a dissipative  operator.

\end{itemize}
\end{theorem}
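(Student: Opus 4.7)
The plan is to follow the two-part structure of the theorem. For part (i), I first invoke Lemma \ref{dlinnaia}(ii) to represent the given $\dot A$ as $\dot A_\theta$ for some $\theta \in [0, 2\pi)$. I then construct the candidate operator coupling $\widehat A$ as the maximal dissipative extension of $\dot A_\theta$ whose domain is forced to contain $\Dom(\widehat A_1) \oplus \{0\}$, in particular the vector $f^1 \oplus 0$. Since $\widehat A$ must extend $\dot A_\theta$ by only one dimension, this requirement pins the extension direction down uniquely, and condition (i) of the coupling definition prescribes $\widehat A(f^1 \oplus 0) = \widehat A_1 f^1 \oplus 0$. After verifying that the resulting $\widehat A$ is maximal dissipative, belongs to $\fD(\cH_1 \oplus \cH_2)$, and satisfies both axioms of the coupling definition, uniqueness is immediate, as both the extension direction and the action of $\widehat A$ on it are dictated by the coupling requirements.

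For part (ii), I choose a reference self-adjoint extension $A$ of $\dot A_\theta$ so that the normalized deficiency vectors $G_\pm^\theta$ of $\dot A_\theta$ --- obtained by adapting Theorem \ref{technich0} to the $e^{i\theta}$-twist of \eqref{sravni} --- satisfy $G_+^\theta - G_-^\theta \in \Dom(A)$. The von Neumann parameter $\kappa = \kappa(\widehat A, A)$ is then characterized by the congruence
\begin{equation*}
G_+^\theta - \kappa G_-^\theta \equiv c\, (f^1 \oplus 0) \pmod{\Dom(\dot A_\theta)}
\end{equation*}
for a nonzero scalar $c$. I intend to solve this congruence by expanding both sides in the basis $\{g_\pm^1, g_\pm^2\}$ modulo $\Dom(\dot A_1) \oplus \Dom(\dot A_2)$, and then exploit the trigonometric relations $\sin\beta = \kappa_1 \sin\alpha$ and $\cos\beta = \kappa_2^{-1}\cos\alpha$ recorded in the proof of Lemma \ref{dlinnaia} via \eqref{tana1}--\eqref{tana2}. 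A direct computation then delivers $|\kappa| = \kappa_1 \kappa_2$, and since $\widehat\kappa(\widehat A)$ is independent of the reference self-adjoint extension (Subsection \ref{ssylka}), this yields the multiplication rule \eqref{multkappa1} for every operator coupling.

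The main technical hurdle arises in this coefficient-matching calculation: the $e^{i\theta}$-twist introduces phases that have to be tracked carefully through the basis change between $\{h_\theta,\, G_+^\theta - \kappa G_-^\theta\}$ and the natural $\{g_\pm^j\}_{j=1,2}$ basis, while the degenerate case $\kappa_2 = 0$ must be treated separately using the alternative expression for $f^2$ from \eqref{eqref3}. Once this bookkeeping is in place, the multiplicativity falls out cleanly from a single equation.
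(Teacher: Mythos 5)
Your overall skeleton (reduce to $\dot A=\dot A_\theta$ via Lemma \ref{dlinnaia}(ii), absorb the phase $\theta$ into the choice of $g_\pm^1$, and extract $\kappa$ from the relations $\sin\beta=\kappa_1\sin\alpha$, $\cos\beta=\kappa_2^{-1}\cos\alpha$) is the same as the paper's, but the mechanism you use to pin down the dissipative extension is wrong, and it breaks both parts of your argument. You require that $\Dom(\widehat A)$ contain $f^1\oplus 0$ (equivalently, that $G_+-\kappa G_-\equiv c\,(f^1\oplus 0)$ modulo $\Dom(\dot A_\theta)$). Since $\dot A_\theta$ has deficiency indices $(1,1)$, any maximal dissipative $\widehat A$ with symmetric part $\dot A_\theta$ has
$$\Dom(\widehat A)=\Dom(\dot A_1\oplus\dot A_2)\,\dot+\,\cL_\theta\,\dot+\,\linspan\{G_+-\kappa G_-\},\qquad |\kappa|<1 .$$
Modulo $\Dom(\dot A_2)$, the $\cH_2$-components of the two added directions are $\cos\alpha\,(g_+^2-\kappa_2^{-1}g_-^2)$ and $-\sin\alpha\,(g_+^2-\kappa\kappa_1 g_-^2)$, which are linearly independent because $\kappa_2^{-1}>1>|\kappa\kappa_1|$ and $\sin\alpha\cos\alpha\ne0$ when $\kappa_2\ne0$. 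Hence the only element of $\cL_\theta\dot+\linspan\{G_+-\kappa G_-\}$ with vanishing $\cH_2$-deficiency part is $0$, so $f^1\oplus 0\notin\Dom(\widehat A)$ for any admissible $\kappa$: the extension you propose to construct does not exist. If you solve your congruence formally, you get $\kappa\kappa_1\sin\alpha=\kappa_2^{-1}\sin\alpha$, i.e. $\kappa=(\kappa_1\kappa_2)^{-1}\notin\bbD$ --- a symptom of the same problem.

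What the coupling axiom actually imposes --- and what the paper's proof uses --- is the much weaker condition that only the orthogonal \emph{projection} onto $\cH_1$ of the new domain vector be compatible with $\widehat A_1$:
$$\mathrm{Proj}_{\cH_1}(G_+-\kappa G_-)=\cos\alpha\, g_+^1-\kappa\cos\beta\, g_-^1=\cos\alpha\Bigl(g_+^1-\tfrac{\kappa}{\kappa_2}g_-^1\Bigr)\in\Dom(\widehat A_1),$$
while the $\cH_2$-projection is permitted to be a nontrivial vector of $\Dom((\widehat A_2)^*)$ outside $\Dom(\dot A_2)$; this is exactly the ``outgoing'' structure of a coupling (note that even in Example \ref{exam} one has $\Dom(\widehat D_\delta)\not\supset\Dom(\widehat D_{\delta_1})\oplus\{0\}$, since extending a function of $\Dom(\widehat D_{\delta_1})$ by zero creates a jump at $\gamma$). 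The projection condition forces $\kappa/\kappa_2=\kappa_1$, i.e. $\kappa=\kappa_1\kappa_2$, which delivers existence, uniqueness, and the multiplicativity \eqref{multkappa1} in one stroke. Your separate treatment of the case $\kappa_2=0$ and your appeal to the reference-independence of $|\kappa|$ are fine, but the core identification of the constraint determining $\kappa$ must be replaced as above.
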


\begin{proof} (i).
As in the proof of Lemma \ref{dlinnaia}, start with  a basis
$g_\pm^j \in \Ker ((\dot A_j)^*\mp iI)$, $\|g_\pm^j\|=1$, $j=1,2$, in the corresponding deficiency subspaces
such that
\begin{equation}\label{bubu}
g_+^j-\kappa_jg_-^j\in \Dom (\widehat A_j),
\quad j=1,2,
\end{equation}
where $\kappa_j$ stands for the absolute value of the von Neumann parameter of $\widehat A_j$,
$$\kappa_j=\widehat \kappa (\widehat A_j), \quad j=1,2.
$$

By Lemma \ref{dlinnaia}, the domain of $\dot A$ admits the representation
$$\dom (\dot A)=\dom(\dot A_1\oplus \dot A_2)\dot +\cL_\theta,
$$
where
\begin{equation}\label{sravnieche}\cL_\theta=
\begin{cases}
\linspan\left \{ e^{i\theta}\sin \alpha \,\, (g_+^1-\kappa_1 g_-^1)
\oplus\cos \alpha \left (
g_+^2- \frac{1}{\kappa_2}g_-^2 \right )\right \},&\text{if } \kappa_2\ne 0
\\
\linspan\left  \{  e^{i\theta} (g_+^1-\kappa_1 g_-^1)
\oplus
\left (-\sqrt{1-\kappa_1^2}g_-^2 \right )\right \},& \text{if }  \kappa_2=0
\end{cases}
\end{equation}
and
\begin{equation}\label{tana11}
\tan \alpha=\frac{1}{\kappa_2}\sqrt{\frac{1-\kappa_2^2}{1-\kappa_1^2}}, \quad \kappa_2\ne 0.
\end{equation}
 Without loss one may assume that $\theta=0$. Indeed, instead of taking the basis $g_\pm^1 \in\Ker ((\dot A_1)^*\mp iI)$, one can start with the basis
$ e^{i\theta}g_\pm^1 \in\Ker ((\dot A_1)^*\mp iI)$ without changing the von Neumann extension parameter $\kappa_1$ that characterizes
the domain of $ \widehat A_1$ (see eq. \eqref{bubu}).

Taking into account  the relations $$\sin \beta =\kappa_1\sin\alpha \quad \text{and }\quad \cos \beta=\frac{1}{\kappa_2}\cos \alpha,
\quad \text{if }  \kappa_2\ne 0,
$$ and
$$\sin \beta =\kappa_1 \quad \text{and }\quad \cos \beta=\sqrt{1-\kappa_1^2},\quad  \text{if }  \kappa_2= 0,$$
it is easy to see that
\begin{equation}\label{netral1}
\cL_0
=\linspan\left \{ (\sin \alpha g_+^1-\sin \beta  g_-^1)
\oplus(\cos \alpha
g_+^2- \cos\beta g_-^2 )
\right \}.
\end{equation}

In accordance with Theorem \ref{technich0}, introduce the maximal dissipative extension $\widehat A$ of $\dot A$
defined as the restriction of $(\dot A_1\oplus\dot A_2)^*$ on
$$\Dom (\widehat A )=\dom (\dot A)\dot +\linspan \left \{ G_+-\kappa_1\kappa_2 G_-\right \},
$$
where the deficiency elements $G_\pm$ of $\dot A$ are given by
\eqref{G+G-}. That is,
\begin{equation}\label{GGG}
G_+=  \cos \alpha \, g_+^1-\sin  \alpha \,g_+^2,
\end{equation}
$$
G_-= \cos  \, \beta  g_-^1- \sin \beta \,g^2_-.
$$

By construction,
\begin{equation}\label{222}\dot A=\widehat A|_{\dom (\widehat A_1)\cap \dom((\widehat A_2)^*)}.
\end{equation}
Clearly,
\begin{align*}
G_+-\kappa_1\kappa_2 G_-&=(\cos \alpha \, g_+^1 -\kappa_1\kappa_2   \cos  \, \beta  g_-^1)\oplus  (-\sin  \alpha \,g_+^2+\kappa_1\kappa_2 \sin \beta \,g^2_-)\\
&=\begin{cases}
\cos \alpha \, ( g_+^1 -\kappa_1  g_-^1)\oplus (- \sin \alpha) \, (g_+^2-k_1^2k_2g_-^2),&\text{if } \kappa_2\ne 0\\
 0\oplus (-g_+^2),&\text{if } \kappa_2= 0
\end{cases}.
\end{align*}
Therefore,
$$
\text{Proj}_{\cH_1}( G_+-\kappa_1\kappa_2 G_-)\in \dom (\widehat A_1),
$$
where   $\text{Proj}_{\cH_1}$ denotes the orthogonal projection of  $\cH_1\oplus \cH_2$ onto $\cH_1$.  Hence,
the subspace $\cH_1$ is invariant for the dissipative operator $\widehat A$ and\begin{equation}\label{333}
\widehat A|_{\cH_1\cap \Dom(\widehat A_1)}=\widehat A_1.
\end{equation}
Combining \eqref{111}, \eqref{222} and \eqref{333} shows that the dissipative extension $\widehat A$ is an operator coupling of $\widehat A_1$ and $\widehat A_2$ such that $\dot A\subset \widehat A_1\uplus
\widehat A_2
$,
which proves the existence part of the assertion.

To prove the uniqueness of the operator coupling $\widehat A$  extending $\dot A$ and satisfying the property \eqref{333},
one observes that since $\widehat  A\in \fD(\cH_1\oplus\cH_2)$, there exists  some $|\kappa|<1$ such that
$$
\Dom (\widehat A)=\dom (\dot A)\dot +\linspan \left \{ G_+-\kappa G_-\right \}.
$$
In particular,
\begin{equation}\label{means}
G_+-\kappa G_-\in \Dom (\widehat A).
\end{equation}

If $\kappa_2\ne 0$, from \eqref{333} it follows that
\eqref{means} holds if and and only if
$$
\text{Proj}_{\cH_1}(G_+-\kappa G_-)=\cos \alpha g_+^1-\kappa \cos \beta g_-^1=\cos \alpha \left (g_+^1-\frac{\kappa}{\kappa_2}  g_-^1\right )\in \dom (\widehat A_1)
$$
which is only possible if
$$
\frac{\kappa}{\kappa_2}=\kappa_1.
$$

If $\kappa_2=0$, and therefore in this case  $G_+=-g_+^2$ (see \eqref{GGG} with $\alpha=\frac\pi2$),
one computes
$$\text{Proj}_{\cH_1}(G_+-\kappa G_-)=-\kappa \cos\beta g_-^1 \in \Dom (\widehat A),
$$
and hence \eqref{333} and \eqref{means} hold if and only if $\kappa=\kappa_2=0$.
In particular, we have shown that in either case
\begin{equation}\label{prodprod}\kappa=\kappa_1\kappa_2.
\end{equation}

(ii).
By definition of the von Neumann parameter associated with a pair of operators, equality \eqref{prodprod} means that
$$
\kappa(\widehat A, A)=\kappa(\widehat A_1, A_1)\cdot \kappa(\widehat A_2, A_2),
$$
where $A$ and $A_j$, $j=1,2$, are self-adjoint reference extensions of $\dot A$ and $\dot A_j$, $j=1,2$, such that
$$G_+-G_-\in \Dom (A)
$$
and
$$
g_+^j-g_-^j\in \Dom (A_j),\quad j=1,2,
$$
which proves the remaining assertion \eqref{multkappa1}.

The proof is complete.
\end{proof}

\section{The  multiplication theorem}

Now, we are ready to state the central result of this  paper.

\begin{theorem}[{\bf The Mutiplication Theorem}]\label{opcoup}
Suppose   that $\widehat A=\widehat A_1\uplus \widehat A_2$ is an operator coupling  of two  maximal dissipative operators
  $\widehat A_k \in \fD(\cH_k)$, $k=1,2$. Denote by  $\dot A $, $\dot A_1$ and $\dot A_2$  the corresponding underlying symmetric operators with deficiency indices $(1,1)$, respectively.
That is,  $$\dot A=\widehat A|_{\Dom(\widehat A)\cap\Dom((\widehat A)^*)}
$$
and
$$
\dot A_k=\widehat A_k|_{\Dom(\widehat A_k)\cap\Dom((\widehat A_k)^*)}, \quad k=1,2.
$$

  Then there exist self-adjoint reference operators  $A$, $A_1$, and $A_2$,  extending   $\dot A$, $\dot A_1$ and $\dot A_2$, respectively,  such that
\begin{equation}\label{proizvas}S(\widehat A_1\uplus \widehat A_2,  A)=S(\widehat A_1,A_1)\cdot S(\widehat A_2, A_2).
\end{equation}
\end{theorem}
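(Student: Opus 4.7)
The plan is to combine three ingredients already established in the paper: the explicit form of the Liv\v{s}ic function of a coupled symmetric operator (Theorem~\ref{technich0}), the multiplicativity of the absolute value of the von Neumann extension parameter under coupling (Theorem~\ref{nachalo}), and the algebraic identity recorded in Remark~\ref{kogdage}. The strategy is to choose the reference self-adjoint operators canonically so that the extension parameters $\kappa_1$ and $\kappa_2$ are real non-negative numbers, and then to reduce \eqref{proizvas} to a polynomial identity in $s_1(z)$ and $s_2(z)$.

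I would first fix reference self-adjoint extensions $A_1$ and $A_2$ of $\dot A_1$ and $\dot A_2$ by means of the transformation law \eqref{transs}, so that $\kappa_j := \kappa(\widehat A_j, A_j) = \widehat\kappa(\widehat A_j) \in [0,1)$ for $j=1,2$, and pick unit deficiency vectors $g_\pm^j \in \Ker((\dot A_j)^* \mp iI)$ with $g_+^j - g_-^j \in \Dom(A_j)$ and $g_+^j - \kappa_j g_-^j \in \Dom(\widehat A_j)$. Since the coupling uniquely specifies the intermediate symmetric operator $\dot A = \widehat A|_{\Dom(\widehat A) \cap \Dom((\widehat A)^*)}$, Lemma~\ref{dlinnaia} combined with the construction in the proof of Theorem~\ref{nachalo} identifies $\dot A$ with some $\dot A_\theta$ from \eqref{sravnieche}. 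The residual phase $e^{i\theta}$ can be absorbed into a unimodular rotation of the basis $g_\pm^1$ without affecting $\kappa_1$, so without loss of generality $\theta = 0$. This places $\dot A$ in the exact form of Theorem~\ref{technich0} with angles $\alpha, \beta$ satisfying $\sin\beta = \kappa_1\sin\alpha$ and $\cos\beta = \kappa_2^{-1}\cos\alpha$ when $\kappa_2 \neq 0$ (and the analogous degenerate relations otherwise). Next, take $A$ to be a self-adjoint extension of $\dot A$ with $G_+ - G_- \in \Dom(A)$, where $G_\pm$ are as in \eqref{G+G-}; such $A$ exists by the von Neumann theory. Theorem~\ref{technich0}(iii) then yields the Liv\v{s}ic function $s(\dot A, A)$ via \eqref{formula}, while Theorem~\ref{nachalo}(ii) gives $\kappa(\widehat A, A) = \kappa_1 \kappa_2 =: k$; in particular, $S(\widehat A, A)(z) = (s(z)-k)/(k s(z)-1)$ by \eqref{ch1}.

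The final step is algebraic. Invoking Remark~\ref{kogdage} with $k = \kappa_1\kappa_2$ turns the combination of \eqref{formula} with \eqref{ch1} into
$$\frac{s(z)-k}{ks(z)-1}=\frac{a_1 s_1(z)+a_2 s_2(z)-s_1(z)s_2(z)-k}{a_2 s_1(z)+a_1 s_2(z)-ks_1(z)s_2(z)-1},$$
where $a_1 = \cos\alpha\cos\beta + k\sin\alpha\sin\beta$ and $a_2 = \sin\alpha\sin\beta + k\cos\alpha\cos\beta$. A short trigonometric computation using $\sin^2\alpha + \cos^2\alpha = 1$ and the constraints linking $\alpha,\beta,\kappa_1,\kappa_2$ collapses these to $a_1 = \kappa_2$ and $a_2 = \kappa_1$. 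Substituting and factoring numerator and denominator identically yields
$$\frac{s(z)-k}{ks(z)-1} = \frac{(s_1(z)-\kappa_1)(s_2(z)-\kappa_2)}{(\kappa_1 s_1(z)-1)(\kappa_2 s_2(z)-1)} = S(\widehat A_1, A_1)(z)\cdot S(\widehat A_2, A_2)(z),$$
which is precisely \eqref{proizvas}. The step I expect to require the most care is the bookkeeping in the degenerate cases $\kappa_1 = 0$ or $\kappa_2 = 0$, where the parametrization of the neutral subspace in Theorem~\ref{nachalo} changes form and one of the two factors on the right-hand side of \eqref{proizvas} reduces (up to a unimodular constant absorbed by adjusting the reference operator) to a Liv\v{s}ic function; each such case must be verified separately, but each reduces to a straightforward specialization of the computation above.
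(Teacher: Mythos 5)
Your proposal is correct and follows essentially the same route as the paper's own proof: it normalizes the deficiency bases so that $\kappa_j=\widehat\kappa(\widehat A_j)$, identifies $\dot A$ with $\dot A_0$ via Lemma \ref{dlinnaia} after absorbing the phase $e^{i\theta}$, invokes Theorem \ref{technich0} for the Liv\v{s}ic function of the coupling and Theorem \ref{nachalo} for $\kappa(\widehat A,A)=\kappa_1\kappa_2$, and then closes with Remark \ref{kogdage} and the identities $a_1=\kappa_2$, $a_2=\kappa_1$. The degenerate case $\kappa_2=0$ that you flag is handled in the paper exactly as you anticipate, by the separate branches of the parametrization of $\cL_0$.
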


\begin{proof}
As in the proof of Theorem \ref{nachalo}, one can always find a basis
$$g_\pm^j \in \Ker ((\dot A_j)^*\mp iI),\quad \|g_\pm^j\|=1,\quad j=1,2, $$
such that
$$
g_+^j-\kappa_jg_-^j\in \Dom (\widehat A_j),
\quad \text{with} \quad  \kappa_j=\widehat \kappa (\widehat A_j),\quad j=1,2,
$$
and that
$$\dom (\dot A)=\dom (\dot A_1\oplus\dot A_2)\dot +\cL_0.
$$
Here
$$
\cL_0
=\linspan\left \{ (\sin \alpha g_+^1-\sin \beta  g_-^1)
\oplus(\cos \alpha
g_+^2- \cos\beta g_-^2 )\right \}
$$
and
\begin{equation}\label{tana111}
\alpha=\arctan \frac{1}{\kappa_2}\sqrt{\frac{1-\kappa_2^2}{1-\kappa_1^2}}
\quad \left (\alpha=\frac\pi2 \quad \text{if } \quad \kappa_2=0\right ),\end{equation}
 \begin{equation}\label{sinsin}
 \sin \beta =\kappa_1
 \begin{cases}
 \sin\alpha,&\text{if } \kappa_2\ne0\\
1,&\text{if } \kappa_2=0
 \end{cases},
\end{equation}
 \begin{equation}\label{coscos}
\cos \beta=
 \begin{cases}
\frac{1}{\kappa_2}\cos \alpha,&\text{if } \kappa_2\ne 0\\
 \sqrt{1-\kappa_1^2},&\text{if }  \kappa_2= 0
 \end{cases}.
 \end{equation}

By  Theorem \ref{technich0}, the deficiency elements $G_\pm$ of $\dot A$ are given by
\eqref{G+G-},
\begin{equation}\label{GGGG}
G_+=  \cos \alpha \, g_+^1-\sin  \alpha \,g_+^2,
\end{equation}
$$
G_-= \cos  \, \beta  g_-^1- \sin \beta \,g^2_-.
$$

Introducing   self-adjoint reference extensions $A$ and $A_j$, $j=1,2$,  of the symmetric operators $\dot A$ and $\dot A_j$, $j=1,2$, such that
$$G_+-G_-\in \Dom (A)\quad \text{
and}\quad
g_+^j-g_-^j\in \Dom (A_j),\quad j=1,2,
$$
one can apply  Theorem \ref{technich0} to conclude that  the  Liv\v{s}ic function of $\dot A$ relative to $A$ admits the representation
\begin{equation}\label{sysy11}
s(z)=s(\dot A, A)(z)=\frac{\cos \alpha \cos \beta s_1(z)-s_1(z)s_2(z)
+ \sin \alpha \sin \beta s_2(z)
}
{1   - (\sin \alpha \sin \beta s_1(z)+
\cos  \alpha \cos  \beta s_2(z) ) }.
\end{equation}
Here
$$s_k(z)=s(\dot A_k, A_k)
$$
are the Liv\v{s}ic functions associated with the pairs $ (\dot A_k, A_k)$, $ k=1,2$.

Denote the operator coupling $\widehat A_1\uplus\widehat A_2$ by $\widehat A$.
By Theorem \ref{nachalo},
\begin{equation}\label{nuii}
G_+-\kappa_1\kappa_2G_-\in \Dom (\widehat A).
\end{equation}
Therefore,  from \eqref{nuii} it follows that
the characteristic function $S(\widehat A, A)$ of the dissipative extension $\widehat A$ relative to the reference self-adjoint operator $A$
has the  form
$$
S(\widehat A, A)(z)=\frac{s(z)-\kappa_1\kappa_2}{
\kappa_1\kappa_2s(z)-1}.
$$

By Remark \eqref{kogdage} with $\kappa=\kappa_1\kappa_2$, one gets that
  $$
\frac{s(z)-\kappa_1\kappa_2}{
\kappa_1\kappa_2s(z)-1}=\frac{a_1 s_1(z)+a_2 s_2(z)-
s_1(z)s_2(z)
-\kappa_1\kappa_2}
{a_2 s_1(z)+a_1 s_2(z)-\kappa_1\kappa_2s_1(z)s_2(z)-
1},
$$
where
$$
a_1=\cos \alpha \cos \beta +\kappa_1\kappa_2\sin \alpha \sin \beta,
$$
$$
a_2=\sin \alpha \sin \beta+\kappa_1\kappa_2\cos \alpha \cos \beta.
$$
From the relations \eqref{tana111}, \eqref{sinsin} and \eqref{coscos} it  follows that $a_1=\kappa_2$  and $a_2=\kappa_1$ and hence
\begin{align}
\frac{s(z)-\kappa_1\kappa_2}{
\kappa_1\kappa_2s(z)-1}&=\frac{\kappa_2 s_1(z)+\kappa_1 s_2(z)-s_1(z)s_2(z)
-\kappa_1\kappa_2 }
{\kappa_1s_1(z)+\kappa_2s_2(z)-\kappa_1\kappa_2s_1(z)s_2(z)-
1}
\nonumber \\
&=
\frac{s_1(z)-\kappa_1}
{\kappa_1 s_1(z)-1}
\cdot \frac{s_2(z)-\kappa_2}
{\kappa_2 s_2(z)-1}.\label{nunu}
\end{align}
Thus,
\begin{equation}\label{prozv}
S(\widehat A, A)(z)= S(\widehat A_1, A_1)(z)\cdot S(\widehat A_2, A_2)(z),
\quad z\in \bbC_+.
\end{equation}

The proof is complete.

\end{proof}

The following example illustrates the Multiplication Theorem \ref{opcoup} for a differentiation operator on a finite interval.

\begin{example}\label{exam} For   a finite interval $\delta=[\alpha, \beta]$, denote by $\widehat D_\delta$
 the   first order differentiation  operator
in the Hilbert space $L^2(\delta)$
given by  the differential expression
 $$
\tau=-\frac{1}{i} \frac{d}{dx}
$$
on
$$
\Dom(\widehat D_\delta )=\left \{f\in W_2^1((\alpha, \beta )),\,\, f(\alpha)=0\right \}
.
$$
It is easy to see that if $\gamma\in (\alpha, \beta)$, and therefore $$ \delta=\delta_1\cup \delta_2,$$
 with $\delta_1=[\alpha,\gamma]$ and $\delta_2=[\gamma, \beta]$, then
\begin{equation}\label{ccoo}
 \widehat D_\delta= \widehat D_{\delta_1\cup\delta_2}=\widehat D_{\delta_1}\uplus \widehat D_{\delta_2},
\end{equation}
where  $\widehat D_{\delta_1}\uplus \widehat D_{\delta_2}$ stands for  the dissipative operator coupling of $\widehat D_{\delta_1}$ and $\widehat D_{\delta_2}$.

Indeed, by construction,  $\widehat D_\delta$ is a maximal dissipative extension of $\widehat D_{\delta_1}$ outgoing from the Hilbert space $\cH_1=L^2(\delta_1)$ to the Hilbert space
$\cH=\cH_1\oplus\cH_2=L^2(\delta)$, where $\cH_2=L^2(\delta_2)$. Moreover, since
$$
\Dom((\widehat D_\delta)^*)=\left \{f\in W_2^1((\alpha, \beta )),\,\, f(\beta)=0\right \},
$$
the restriction $\dot D_\delta$ of $\widehat D_\delta $ on
\begin{equation}\label{domu}
\Dom (\dot D_\delta)=\Dom(\widehat D_\delta)\cap\Dom(  (\widehat D_\delta)^*)
\end{equation}
is a symmetric operator with deficiency indices $(1,1)$ given by the  same differential expression  $\tau$ on
$$
\Dom (\dot D_\delta)=\left \{f\in W_2^1((\alpha, \beta )),\,\,f(\alpha)= f(\beta)=0\right \}.
$$
On the other hand,
$$
\Dom((\widehat D_{\delta_2})^*)=\left \{f\in W_2^1((\gamma, \beta )),\,\, f(\beta)=0\right \}.
$$
Therefore,
\begin{equation}\label{bkluch}\dot D_\delta\subset \widehat D_{\delta_1}\oplus (\widehat D_{\delta_2})^*.
\end{equation}
Combining \eqref{domu} and \eqref{bkluch} shows that $\widehat D_\delta$ coincides with  the dissipative operator coupling of $\widehat D_{\delta_1}$ and $\widehat D_{\delta_2}$. That is,
\eqref{ccoo} holds.

By Lemma \ref{app} (see Appendix A), the Liv\v{s}ic function associated with the pair $(\widehat D_\delta,  D_\delta)$  is of the form
$$S(\widehat D_\delta,  D_\delta)(z)=\exp ( i |\delta|z), \quad z\in \bbC_+,
$$
where $|\cdot|$ stands for Lebesgue measure of a Borel set and $ D_\delta$ is the self-adjoint reference   differentiation operator with antiperiodic boundary conditions defined on
$$
\Dom (D_\delta)=\left \{f\in W_2^1((\alpha, \beta )),\,\,f(\alpha)= -f(\beta)\right \}.
$$

Therefore, taking into account that
$$
\exp ( i |\delta|z)=\exp\left (i(|\delta_1|+|\delta_2|)z\right )=\exp ( i |\delta_1|z)\cdot \exp ( i |\delta_2|z),
$$
one obtains that
$$
S(\widehat D_\delta, D_\delta)(z)=S(\widehat D_{\delta_1}, D_{\delta_1})(z)\cdot S(\widehat D_{\delta_2},  D_{\delta_2})(z),
$$
which illustrates  the statement of Theorem \ref{opcoup}.

\end{example}

 We conclude this section by the   following  purely analytic result.

\begin{theorem} \label{mainan}Let $\fM$, $\fC$, and $\fS$ be the function classes of Weyl-Titchmarsh, Liv\v{s}ic, and characteristic functions, respectively.
Then,
\begin{itemize}
\item[(i)] The class $\fM$  is a convex set with respect to addition;
\item[(ii)] The class $\fS$  is closed under multiplication,
$$\fS\cdot \fS\subset \fS;
$$
\item[(iii)] The subclass  $\fC\subset \fS$   is  a (double sided) ideal  under multiplication in the sense that
$$
\fC\cdot \fS=\fS\cdot \fC\subset \fC;
$$

\item[(iv)] The class $\fC$   is closed under multiplication:
$$\fC\cdot\fC\subset \fC.$$

\end{itemize}
\end{theorem}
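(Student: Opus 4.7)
My plan is to deduce each of the four assertions directly from the operator-theoretic realization results already proved in the paper, so that the theorem becomes essentially a translation of Theorems \ref{additionth}, \ref{opcoup}, and \ref{nachalo} into the language of function classes.

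For (i), I would start with arbitrary $M_1,M_2\in\fM$, realized as the Weyl-Titchmarsh functions of pairs $(\dot A_1,A_1)$ and $(\dot A_2,A_2)$, and a convex combination $pM_1+qM_2$ with $p,q\ge 0$ and $p+q=1$. Choose $\alpha\in[0,\pi/2]$ with $\cos^{2}\alpha=p$ and $\sin^{2}\alpha=q$, and apply Theorem \ref{additionth} with $\beta=\alpha$; then the associated symmetric extension $\dot A$ of $\dot A_1\oplus\dot A_2$ yields
\[
M(\dot A,A_1\oplus A_2)(z)=\cos^{2}\alpha\,M_1(z)+\sin^{2}\alpha\,M_2(z)=pM_1(z)+qM_2(z),
\]
so $pM_1+qM_2\in\fM$, which is exactly (i).

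For (ii), pick any $S_1,S_2\in\fS$, realized as $S(\widehat A_1,A_1)$ and $S(\widehat A_2,A_2)$ for some $\widehat A_k\in\fD(\cH_k)$. By Lemma \ref{dlinnaia} and Theorem \ref{nachalo} an operator coupling $\widehat A=\widehat A_1\uplus\widehat A_2$ exists in $\fD(\cH_1\oplus\cH_2)$, and Theorem \ref{opcoup} then produces a reference self-adjoint extension $A$ such that $S_1\cdot S_2=S(\widehat A,A)\in\fS$. For (iii) and (iv) I would invoke the multiplicativity \eqref{vnkappa} in Theorem \ref{nachalo}. Recall from \eqref{dotD} and the surrounding discussion that the class $\fC$ is characterized inside $\fS$ as those characteristic functions realizable by $\widehat A\in\dot{\fD}$, i.e.\ with $\widehat\kappa(\widehat A)=0$ (equivalently $S(i)=0$ by \eqref{kappa1}). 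If at least one of $S_1,S_2$ lies in $\fC$, then we may realize the corresponding $\widehat A_j$ with $\widehat\kappa(\widehat A_j)=0$, so
\[
\widehat\kappa(\widehat A_1\uplus\widehat A_2)=\widehat\kappa(\widehat A_1)\cdot\widehat\kappa(\widehat A_2)=0,
\]
which means $\widehat A_1\uplus\widehat A_2\in\dot{\fD}$ and hence $S_1\cdot S_2\in\fC$; this proves $\fC\cdot\fS\subset\fC$ and (by commutativity of pointwise multiplication and the symmetric role of the factors) $\fS\cdot\fC\subset\fC$, giving (iii). Statement (iv) is the special case where both factors lie in $\fC$.

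There is really no hard step once Theorems \ref{additionth}, \ref{opcoup}, and \ref{nachalo} are available; the only point requiring a bit of care is the identification of $\fC$ with the kernel of $\widehat\kappa$. The subtlety is that a Livšic function $s(\dot A,A)$ coincides with the characteristic function $S(\widehat A,A')$ of a \emph{different} reference extension $A'$ only up to a unimodular factor, but this is harmless because $\fC$ itself is invariant under multiplication by unimodular constants by \eqref{theta}. Once this identification is made, parts (ii)--(iv) reduce to the multiplicativity \eqref{vnkappa}, and the whole theorem is an immediate corollary of the work in Sections 3--6.
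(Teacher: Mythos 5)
Your proposal is correct and follows essentially the same route as the paper: (i) from Theorem \ref{additionth} with $\cos^2\alpha=p$, (ii) from Theorem \ref{opcoup}, and (iii)--(iv) from the identification of $\fC$ with the characteristic functions of operators in $\dot\fD=\Ker(\widehat\kappa)$ together with the multiplicativity \eqref{vnkappa}. The only cosmetic difference is that the paper's subsection on $\widehat\kappa$ arranges the reference extension $A'$ so that the Liv\v{s}ic function equals the characteristic function exactly, whereas you absorb the unimodular discrepancy using \eqref{theta}; both are fine.
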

\begin{proof} One notices that (i) is a corollary of Theorem \ref{additionth},
(ii) follows from Theorem \ref{opcoup}, and
(iv) follows from (iii). Therefore, it remains to prove (iii).

(iii). Suppose that  $S_1\in \fC$ and $S_2\in \fS$. Since $\fC\subset \fS$ and $\dot \fD=\ker \widehat \kappa$,
 $S_1$ is the characteristic function of a dissipative operator $\widehat  A_1$ from  $\dot \fD$ (see \eqref{dotD})
relative to some self-adjoint reference operator $A_1$. Since $S_2\in \fS$,  the function  $S_2$ is the characteristic function of a dissipative operator
 $\widehat A_2\in \fD$
relative to some self-adjoint reference operator $A_2$. By Theorem \ref{opcoup}, the product $S_1\cdot S_2$ is
 the characteristic function of an operator coupling $\widehat A_1\uplus \widehat  A_2$ relative to  an appropriate reference self-adjoint operator.
Since $S_1\in \fC$, and therefore $\widehat \kappa (\widehat A_1)=0$, from  Theorem \ref{opcoup} follows that
 $\widehat \kappa(\widehat A_1\uplus\widehat A_2)=0$ and hence the product $S_1\cdot S_2$ belongs to the class $\fC$.
\end{proof}

\begin{remark} Recall that the subclass $\dot \fD$ of $\fD$ has been  defined as the set of all dissipative operators form $\fD$ with zero value of the corresponding von Neumann parameter (see   \eqref{dotD}).  To express this in a different way, the characteristic functions for the operators from $\dot \fD$ are exactly those
 that  belong the class $\fC$.

Having this in mind, a non-commutative version of the ``absorption principle'' (iii) can be formulated as follows.

Suppose that $\widehat A\in \dot \fD(\cH_1)\subset \fD(\cH_1)$ and  $\widehat B\in \fD(\cH_2)$. Then
$$
\widehat A\uplus \widehat B\in \dot \fD(\cH_1\oplus \cH_2)
$$
and
$$
\widehat B\uplus \widehat A\in \dot \fD(\cH_2\oplus \cH_1).
$$
\end{remark}

\begin{appendix}
\section{The differentiation on a finite interval}

In this Appendix we collect some known results, see, e.g., \cite{AkG}, regarding  the maximal and minimal  differentiation operators on a finite interval.
Here we present them in a version adapted  to the notation of the current paper.

\begin{lemma}\label{app}
 Let $\widehat D$ be the   first order differentiation  operator
in the Hilbert space $L^2(0, \ell)$
given by  the differential expression
 $$
\tau=-\frac{1}{i} \frac{d}{dx}
$$
on
$$
\Dom(\widehat D )=\left \{f\in W_2^1((0, \ell)),\,\, f(0)=0\right \}
$$ and $D$ the self-adjoint realization of $\tau$ on
$$
\Dom( D )=\left \{f\in W_2^1((0, \ell)),\,\, f(\ell)=-f(0)\right \}.
$$
Then \begin{itemize}
\item[(i)]
the restriction $\dot D$ of the operator $\widehat D$ on
\begin{align}
\Dom (\dot D&)=\dom (\widehat D)\cap \dom(\widehat D^*) \label{dommm}
\end{align}
is a  symmetric operator with deficiency indices $(1, 1)$;

\item[(ii)]
the Liv\v{s}ic  function $s=s(\dot D, D)$
of the symmetric  operator $\dot D$ relative to the self-adjoint reference operator $D$
is of the form
\begin{equation}\label{chrff3}
  s(z)=
\frac{e^{i\ell z}-e^{-\ell}}{e^{-\ell}e^{i\ell z}-1};
\end{equation}
\item[(iii)] the von Neumann parameter $\kappa(\dot D, \widehat D)$ associated with the pair  $(\dot D, \widehat D) $ is given by
$$
\kappa(\dot D, \widehat D)=e^{-\ell};
$$

\item[(iv)] the characteristic function $S=S(\widehat D,D)$ of the dissipative operator $\widehat D$ relative to $D$ is an inner singular function given by
$$
S(z)=e^{i\ell z}, \quad z\in \bbC_+.$$
\end{itemize}
\end{lemma}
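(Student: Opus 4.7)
The plan is to unpack the Lemma directly: identify $(\widehat D)^*$, extract $\dot D$ as the symmetric restriction, compute the deficiency vectors explicitly, evaluate the Liv\v{s}ic function from its definition \eqref{charfunk}, read off $\kappa$ from the boundary condition at $x=0$, and finally assemble $S$ via \eqref{ch1}.

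First I would identify $(\widehat D)^*$ as the realization of $\tau$ on $\{f\in W_2^1((0,\ell)):f(\ell)=0\}$; intersecting with $\Dom(\widehat D)$ yields the two-sided Dirichlet realization $\dot D$ on $\{f(0)=f(\ell)=0\}$, whose adjoint is the maximal operator on $W_2^1((0,\ell))$ with no boundary condition. Solving $\tau g=zg$ gives the one-dimensional spaces $\Ker((\dot D)^*-zI)=\linspan\{e^{-izx}\}$; in particular the normalized deficiency vectors at $z=\pm i$ are
\[
g_+(x)=\sqrt{\tfrac{2}{e^{2\ell}-1}}\,e^{x},\qquad g_-(x)=\sqrt{\tfrac{2}{1-e^{-2\ell}}}\,e^{-x},
\]
so $\dot D$ has deficiency indices $(1,1)$ and (i) follows. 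A key book-keeping relation that will drive the rest of the proof is $C_+/C_-=e^{-\ell}$.

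For (ii), I would take $g_z(x)=e^{-izx}$ and compute the elementary integrals
\[
(g_z,g_+)=C_+\,\frac{1-e^{-(iz-1)\ell}}{iz-1},\qquad (g_z,g_-)=C_-\,\frac{1-e^{-(iz+1)\ell}}{iz+1}.
\]
The crucial algebraic observation is the identity
\[
\frac{z-i}{z+i}\cdot\frac{iz-1}{iz+1}=1,
\]
which follows from $iz-1=i(z+i)$ and $iz+1=i(z-i)$; it cancels the Cayley prefactor in \eqref{charfunk}. Combined with $C_-/C_+=e^{\ell}$, multiplying numerator and denominator by $e^{i\ell z}$, and a short simplification, this yields \eqref{chrff3}. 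Before this, one must check the reference-domain condition $g_+-g_-\in\Dom(D)$ for the antiperiodic realization: this reduces to $C_+(1+e^\ell)=C_-(1+e^{-\ell})$, which is precisely the same identity $C_+/C_-=e^{-\ell}$.

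For (iii) I would impose $g_+-\kappa g_-\in\Dom(\widehat D)$, i.e.\ $(g_+-\kappa g_-)(0)=0$, whence $\kappa=C_+/C_-=e^{-\ell}$. For (iv) I would substitute (ii) and $\kappa=e^{-\ell}$ into \eqref{ch1}: clearing the common denominator $e^{-\ell}e^{i\ell z}-1$ produces a factor $1-e^{-2\ell}$ in both numerator and denominator, and after cancellation one is left with $S(z)=e^{i\ell z}$. No step is a real obstacle; the only subtlety worth flagging is the internal consistency of the normalization, namely that the single ratio $C_+/C_-=e^{-\ell}$ simultaneously enforces the reference-domain compatibility for $D$ and the identification $\kappa=e^{-\ell}$.
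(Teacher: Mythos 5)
Your proposal is correct and follows essentially the same route as the paper: compute the normalized deficiency vectors $g_\pm$, verify $g_+-g_-\in\Dom(D)$ via the ratio $C_+/C_-=e^{-\ell}$, evaluate $s$ directly from \eqref{charfunk} (the paper performs the cancellation $\frac{z-i}{z+i}\cdot\frac{iz-1}{iz+1}=1$ silently, which you make explicit), read off $\kappa=e^{-\ell}$ from the boundary condition at $x=0$, and substitute into \eqref{ch1}. All computations check out against the paper's.
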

\begin{proof} It is straightforward to conclude that
$$
\Dom(\widehat D^*)=\left \{f\in W_2^1((0, \ell)),\,\, f(\ell)=0\right \}
$$
and therefore
\begin{align}
\Dom (\dot D)=\dom (\widehat D)\cap \dom(\widehat D^*)
=\left \{f\in W_2^1((0,\ell)),\,\,f(0)= f(\ell)=0\right \}\nonumber.
\end{align}

Clearly,
$
\Ker ((\dot D)^*-z I)=\linspan\{ g_z\},
$
where
$$
g_z(x)=e^{-izx}, \quad x\in [0, \ell], \quad z\in \bbC,
$$
which proves (i).

To compute the Liv\v{s}ic function, one observes that
$
\Ker ((\dot D)^*\mp iI)=\linspan\{ g_\pm\},
$
where
$$
g_+(x)=\frac{\sqrt{2}}{\sqrt{e^{2\ell}-1}}e^{ x}
\quad \text{ and } \quad
g_-(x)=\frac{\sqrt{2}}{\sqrt{1-e^{-2\ell}}}e^{ -x}, \quad x\in [0, \ell],
$$
Obviously,
$\|g_\pm \|=1.
$

Since
$$g_+(0)-g_-(0)=\frac{\sqrt{2}}{\sqrt{e^{2\ell}-1}}-\frac{\sqrt{2}}{\sqrt{1-e^{-2\ell}}}=\frac{\sqrt{2}}{\sqrt{e^{2\ell}-1}}\left (1-e^{\ell}\right )
$$
and
$$\quad\quad\quad\,\, \,\,g_+(\ell)-g_-(\ell)=\frac{\sqrt{2}}{\sqrt{e^{2\ell}-1}}e^\ell-\frac{\sqrt{2}}{\sqrt{1-e^{-2\ell}}}e^{-\ell}=-\frac{\sqrt{2}}{\sqrt{e^{2\ell}-1}}\left (1-e^{\ell}\right ),
$$
one observes that
$
g_+(0)-g_-(0)=-(g_+(\ell)-g_-(\ell))
$ which proves that
\begin{equation}\label{propp}
g_+-g_-\in \Dom(D).
\end{equation}

Now, since \eqref{propp} holds, in accordance with definition
the Liv\v{s}ic  function $s=s(\dot D, D)$
of the symmetric  operator $\dot D$ relative to the self-adjoint reference operator $D$ can be evaluated as
\begin{align*}
s(z)&=\frac{z-i}{z+i}\cdot \frac{(g_z,g_-)}{(g_z,g_+)}=\sqrt{\frac{e^{2\ell}-1}{1-e^{-2\ell}}}\cdot
\frac{z-i}{z+i}\cdot\frac{\int\limits_0^\ell e^{(-iz-1)x}dx}{\int\limits_0^\ell e^{(-iz+1)x}dx}
\\&
=\sqrt{\frac{e^{2\ell}-1}{1-e^{-2\ell}}}\cdot\frac{e^{(-iz-1)\ell}-1}{e^{(-iz+1)\ell}-1}=
e^{\ell} \frac{e^{(-iz-1)\ell}-1}{e^{(-iz+1)\ell}-1}
\\ &=\frac{e^{-iz\ell}-e^\ell}{e^\ell e^{-iz\ell}-1},\quad z\in\bbC_+.
\end{align*}
Thus,
$$
  s(z)=
\frac{e^{i\ell z}-e^{-\ell}}{e^{-\ell}e^{i\ell z}-1},\quad z\in \bbC_+,
$$
which proves  the representation (ii).

Next, since
$
g_+(0)=e^{-\ell}g_-(0),
$
one also obtains  that
$$
g_+-e^{-\ell} g_-\in \Dom (\widehat D)
$$
which proves the assertion  (iii) taking into account \eqref{propp}.

 Finally,  one concludes that
the characteristic function $S=S(\widehat D, D)$ of the dissipative operator  $\widehat D$ relative to the self-adjoint reference operator $D$
is  given by
\begin{equation}\label{kapling3}
S(z)=\frac{s(z)-e^{-\ell}}{e^{-\ell}s(z)-1}=\frac{\frac{e^{i\ell z}-e^{-\ell}}{e^{-\ell}e^{i\ell z}-1}-e^{-\ell}}{e^{-\ell} \frac{e^{i\ell z}-e^{-\ell}}{e^{-\ell}e^{i\ell z}-1}-1}
=
e^{i\ell z}, \quad z\in \bbC_+,
\end{equation}
which proves (iv).

The proof is complete.
\end{proof}

\end{appendix}

\end{document}